\theoremstyle{plain}
\newtheorem{Theorem}{Thm}[section]
\newtheorem{Thm}[Theorem]{Theorem}
\newtheorem{Lem}[Theorem]{Lemma}
\newtheorem{Cor}[Theorem]{Corollary}
\newtheorem{Prop}[Theorem]{Proposition}
\newtheorem*{Thm*}{Theorem}
\newtheorem*{Lem*}{Lemma}
\newtheorem*{Rem*}{Remark}
\newtheorem*{Conj}{Conjecture}
\theoremstyle{definition}
\newtheorem{Def}[Theorem]{Definition}
\newtheorem{Exm}[Theorem]{Example}
\newtheorem{Rem}[Theorem]{Remark}
\newcommand\C{\mathbb{C}}
\renewcommand\P{\mathbb{P}}
\newcommand\R{\mathbb{R}}
\newcommand\Z{\mathbb{Z}}
\newcommand\V{\mathcal{V}}
\newcommand\sH{\mathcal{H}}
\DeclareMathOperator\lspan{span}
\DeclareMathOperator\conv{conv}
\DeclareMathOperator\codim{codim}
\DeclareMathOperator\mim{im}
\DeclareMathOperator\rank{rank}
\newcommand\ol{\overline}
\newcommand\into{\hookrightarrow}
\renewcommand\div{ {\rm div}}
\newcommand\acm{arithmetically Cohen-Macaulay}
\subjclass[2010]{14P05, 14J26, 12D15, 90C22}  
\keywords{nonnegative polynomials, sums of squares, varieties of minimal degree, low-rank decompositions}    
\begin{document}
\title[Low-rank sum-of-squares representations]{Low-rank sum-of-squares representations on varieties of minimal degree}
\author{Grigoriy Blekherman}
\address{Georgia Institute of Technology, Atlanta GA }
\email{greg@math.gatech.edu }
\author{Daniel Plaumann}
\address{Technische Universit\"at Dortmund, Dortmund, Germany }
\email{Daniel.Plaumann@math.tu-dortmund.de}
\author{Rainer Sinn}
\address{Georgia Institute of Technology, Atlanta, GA, USA }
\email{rsinn3@math.gatech.edu }
\author{Cynthia Vinzant}
\address{North Carolina State University, Raleigh, NC, USA }
\email{clvinzan@ncsu.edu}

\begin{abstract}
A celebrated result by Hilbert says that every real nonnegative ternary quartic is a sum of three squares. We show more generally that every nonnegative quadratic form on a real projective variety $X$ of minimal degree is a sum of ${\dim(X)+1}$ squares of linear forms.
This strengthens one direction of a recent result due to Blekherman, Smith, and Velasco. Our upper bound is the best possible, and it implies the existence of low-rank factorizations of positive semidefinite bivariate matrix polynomials and representations of biforms as sums of few squares.
We determine the number of equivalence classes of sum-of-squares representations of general quadratic forms on surfaces of minimal degree, generalizing the count for ternary quartics by Powers, Reznick, Scheiderer, and Sottile.
\end{abstract}
\maketitle

\section*{Introduction}
The relationship between nonnegative polynomials and sums of squares
is a fundamental question in real algebraic geometry. It was first
studied by Hilbert in an influential paper from 1888. He showed that every
nonnegative homogeneous polynomial in $n$ variables of degree $2d$ is
a sum of squares in the following cases only:
bivariate forms ($n=2$), quadratic forms ($2d=2$), and ternary
quartics ($n=3$, $2d=4$).

In the case of ternary quartics, Hilbert showed that every nonnegative
polynomial is a sum of at most three squares. This bound is sharp: a
general nonnegative ternary quartic is not a sum of two squares. This
result has attracted attention over the years; see \cite{Rajwade,
 Rudin, ScheidererMR2580677, SwanMR1803372}. The most recent
elementary proof is due to Pfister and Scheiderer
\cite{PfisterScheiderer}. A different proof was given by Powers,
Reznick, Scheiderer, and Sottile
\cite{PowersReznickScheidererSottileMR2103198}. Additionally, they
showed that a general ternary quartic is a sum of three squares in
precisely $63$ essentially different ways over $\C$, and a general
nonnegative ternary quartic is a sum of three squares in $8$ ways over
$\R$. The number $63$ was also obtained by Plaumann, Sturmfels, and
Vinzant \cite{PSV}. In the two other cases of Hilbert's theorem, 
bounds on the number of squares are well-known: every nonnegative bivariate
form is a sum of at most two squares and a quadratic form in $n$
variables is a sum of at most $n$ squares. Both bounds are sharp
generically.

Recently, Blekherman, Smith, and Velasco generalized Hilbert's theorem
to polynomials nonnegative on an irreducible variety $X$ with dense
real points \cite{BSV}. By considering the $d$-th Veronese embedding
$\nu_d(X)$ of $X$, we may reduce the case of polynomials of degree
$2d$ nonnegative on $X$ to quadratic forms on $\nu_d(X)$. Therefore,
it suffices to classify all real varieties $X$ on which all
nonnegative quadratic forms are sums of squares. It was shown in
\cite{BSV} that these are exactly the varieties of minimal degree,
which were classified by Del Pezzo and Bertini; see
\cite{EisenbudHarris} for a modern exposition.

We provide a strengthening and a new proof
of one direction of the main theorem of Blekherman, Smith, and
Velasco. Our first main result generalizes the previous work on the
three cases of equality in Hilbert's theorem to varieties of minimal
degree. 

\begin{Thm*}[Theorem~\ref{Thm:varmin}]
Let $X\subset \P^n$ be a nondegenerate irreducible variety of minimal
degree with dense real points. Then every quadratic form nonnegative
on $X$ is a sum of $\dim(X)+1$ squares in the homogeneous coordinate ring $\R[X]$. 
\end{Thm*}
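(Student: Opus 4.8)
The plan is to pass to the Gram-matrix description of sums of squares and to locate a representing matrix of the smallest possible rank. Write $d=\dim X$ and $N=n+1$, and identify $\R[X]_1$ with $\R^N$, so that a symmetric $N\times N$ matrix $G$ encodes the quadratic form $x^{\mathsf T}Gx$; this form is a sum of $\rank(G)$ squares of linear forms, and two matrices represent the same element of $\R[X]_2$ exactly when their difference lies in $I(X)_2\subseteq\Sym^2(\R^N)$. Thus the representations of a fixed nonnegative $q$ as a sum of squares in $\R[X]$ are the points of the spectrahedron $(q+I(X)_2)\cap\{G\succeq 0\}$, and the minimal number of squares is the minimal rank attained on it. Since $X$ is nondegenerate with dense real points, no nonzero positive semidefinite quadric vanishes on $X$, so $I(X)_2\cap\{G\succeq0\}=\{0\}$ and the spectrahedron is compact. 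The whole problem is therefore to show that the affine space $q+I(X)_2$ meets the locus of positive semidefinite matrices of rank at most $d+1$; doing so will also reprove existence of a representation, the direction of \cite{BSV} we strengthen. By semicontinuity (rank can only drop in a limit, and the spectrahedra are uniformly bounded near a fixed $q$) it suffices to treat $q$ in the interior of the cone of nonnegative forms, i.e.\ forms strictly positive on $X(\R)$.

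The guiding observation is a numerical coincidence that singles out varieties of minimal degree. Since $X$ is arithmetically Cohen--Macaulay with $h$-vector $(1,\,n-d)$, its ideal is generated by quadrics and a short Hilbert-function computation gives $\dim I(X)_2=\binom{n-d+1}{2}$. On the other hand the determinantal variety $\mathcal R_{\le d+1}=\{G\in\Sym^2(\R^N):\rank(G)\le d+1\}$ has codimension $\binom{N-(d+1)+1}{2}=\binom{n-d+1}{2}$. Hence $\dim(q+I(X)_2)$ equals exactly the codimension of $\mathcal R_{\le d+1}$: one expects the two to meet in finitely many points, while $\mathcal R_{\le d}$ has strictly larger codimension $\binom{n-d+2}{2}$ and is generically missed, so that a generic nonnegative $q$ is not a sum of $d$ squares. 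This explains both why $d+1$ is the correct bound and why it is sharp, and it is the higher-dimensional shadow of the count of $63$ Gram matrices for ternary quartics in \cite{PowersReznickScheidererSottileMR2103198}, the Veronese surface $\nu_2(\P^2)\subset\P^5$ being the case $d=2$, $n=5$.

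The heart of the proof is to produce among these low-rank matrices one that is \emph{real and positive semidefinite}. The natural first attempt is rank reduction inside the spectrahedron: at $G\succeq0$ with column space $V=\mim(G)$ one moves along $G-tA$ for $A\in I(X)_2$ with $\mim(A)\subseteq V$ and stops on the boundary of the positive semidefinite cone, lowering the rank; this is possible precisely when $I(X)_2\cap\Sym^2(V)\neq 0$, equivalently when the image of $X$ under the projection to $\P(V)$ lies on a quadric. The obstacle is that monotone reduction can stall at an extreme point of rank $d+2$: already on the twisted cubic the form $x_0^2+x_1^2+x_3^2$ has a rank-$3$ extreme Gram matrix (its projection to $\P^2$ is a cubic lying on no conic) even though it is a sum of two squares. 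Thus reducing to an arbitrary extreme point does not reach the bound, and the argument must instead use the global dimension coincidence above to find the distinguished rank-$(d+1)$ stratum and certify that one of its points can be chosen positive semidefinite.

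I expect the cleanest rigorous route to pass through the Del Pezzo--Bertini classification \cite{EisenbudHarris}: every such $X$ is a quadric, a cone over the Veronese surface, or a rational normal scroll, and in each case $I(X)_2$ admits an explicit determinantal presentation as the $2\times2$ minors of a matrix of linear forms. This structure should let one peel off one square and induct on $d$ via a general hyperplane section, which is again a variety of minimal degree, reducing the theorem to the base cases of quadrics and the Veronese surface. The main difficulty throughout is guaranteeing that the low-rank representative is real and positive semidefinite, rather than merely that a complex rank-$(d+1)$ Gram matrix exists; this is exactly the step where nonnegativity of $q$ and density of $X(\R)$ are essential, and it is the point I would expect to require the most care.
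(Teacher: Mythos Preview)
Your write-up is a well-informed discussion of the problem, but it is not a proof: you correctly set up the Gram-matrix reformulation, correctly compute the dimension coincidence $\dim I(X)_2=\binom{n-d+1}{2}=\codim\mathcal R_{\le d+1}$, and correctly diagnose that monotone rank reduction inside the spectrahedron can stall (your twisted-cubic example is valid). But you then leave the decisive step undone. Your proposed route---use the Del Pezzo--Bertini classification, peel off one square, and induct on $\dim X$ via a general hyperplane section---is only stated as an expectation, and in fact it is not clear how to make it work: if $X'\subset X$ is a hyperplane section and $q|_{X'}=\sum_{i=0}^{d-1}\ell_i^2$ in $\R[X']$, there is no obvious mechanism that produces $\ell_d\in\R[X]_1$ with $q-\sum\ell_i^2$ a square on $X$. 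So the very step you flag as requiring ``the most care'' is the one that is missing entirely.

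The paper avoids all of this with a short topological argument in the spirit of Hilbert's original proof for ternary quartics. Rather than searching inside the Gram spectrahedron, it studies the squaring map
\[
\phi\colon(\R[X]_1)^{d+1}\to\R[X]_2,\qquad (\ell_0,\dots,\ell_d)\mapsto\sum_{i=0}^{d}\ell_i^2,
\]
and shows (i) $\phi$ is proper and closed, and (ii) $d\phi$ is surjective at any tuple with $X\cap\V(\ell_0,\dots,\ell_d)=\emptyset$, because such a tuple is a regular sequence on the arithmetically Cohen--Macaulay ring $\R[X]$, so the only linear syzygies are the Koszul ones. The rank of $d\phi$ is then $(d+1)(n+1)-\binom{d+1}{2}=\dim\R[X]_2$; this is precisely your dimension coincidence, but used dually, to show the map is a local submersion rather than to count intersections with a determinantal locus. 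One then takes the connected open set $\mathcal P$ of strictly positive quadrics cutting $X$ in a smooth curve: $\mathcal P\cap\mim(\phi)$ is closed in $\mathcal P$ by properness, and open in $\mathcal P$ because any $q=\sum\ell_i^2\in\mathcal P$ forces the $\ell_i$ to be base-point-free (a common zero would be a singular point of $q$ on $X$), so the implicit function theorem applies. Hence $\mathcal P\subset\mim(\phi)$, and closure of $\mim(\phi)$ finishes the general nonnegative case. No classification, no induction, no Gram-matrix rank reduction is needed; nonnegativity and density of $X(\R)$ enter only through the connectedness of $\mathcal P$ and the fact that $\phi$ never vanishes off zero.
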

The above bound is sharp in the sense that a general quadratic form
nonnegative on $X$ is not a sum of fewer than $\dim(X)+1$ squares.
This theorem gives a unified proof of this bound, which was proved independently with different techniques for the different families of varieties of minimal degree. Our proof follows a line of reasoning similar to that of Hilbert's original proof for ternary quartics.

In the case that the variety $X$ is a rational normal scroll, this
theorem has an elegant interpretation from the point of view of
non-commutative real algebraic geometry. It gives a tight
Positivstellensatz for homogeneous bivariate matrix polynomials.

\begin{Thm*}[Corollary~\ref{cor:noncomm}]
Let $A$ be a symmetric $n\times n$ matrix whose entries are homogeneous polynomials in two variables $s$ and $t$. Suppose that $A(s,t)$ is a positive semidefinite matrix for every $(s,t)\in\R^2$. Then there is a matrix $B$ of size $n\times(n+1)$ 
with entries in $\R[s,t]$ such that $A = B B^T$.
\end{Thm*}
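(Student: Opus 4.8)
The plan is to recognize the matrix polynomial $A$ as a quadratic form on a rational normal scroll and then invoke Theorem~\ref{Thm:varmin}. First I would dispose of degenerate rows: if a diagonal entry $A_{ii}$ vanishes identically, then positive semidefiniteness of $A(s,t)$ forces the whole $i$-th row and column to vanish, so they may be deleted and reinserted as zero rows/columns of $B$ at the very end (after factoring the smaller matrix and padding out to $n\times(n+1)$). Thus I may assume $A_{ii}\neq 0$ for all $i$. Since $A_{ii}(s,t)\ge 0$ on $\R^2$, it is a nonnegative binary form and hence of even degree $2d_i$; set $d_i=\tfrac12\deg A_{ii}$.

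The crux is a degree-balancing lemma: whenever $A_{ij}\neq 0$, one has $\deg A_{ij}=d_i+d_j$. The inequality $\deg A_{ij}\le d_i+d_j$ follows from nonnegativity of the principal $2\times 2$ minor $A_{ii}A_{jj}-A_{ij}^2$. For the reverse, pick $(s_0,t_0)$ with $A_{ij}(s_0,t_0)\neq 0$; then $A_{ii}(s_0,t_0)$ and $A_{jj}(s_0,t_0)$ are strictly positive (otherwise the $i$-th or $j$-th row would vanish at that point), and restricting the minor to the ray $\lambda\mapsto(\lambda s_0,\lambda t_0)$ yields $\lambda^{2(d_i+d_j)}A_{ii}(s_0,t_0)A_{jj}(s_0,t_0)-\lambda^{2\deg A_{ij}}A_{ij}(s_0,t_0)^2$, whose sign for small $\lambda>0$ is governed by the smaller power of $\lambda$; nonnegativity then forces $\deg A_{ij}=d_i+d_j$. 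With this in hand, $A$ has exactly the graded shape of a quadratic form on the scroll $X=S(d_1,\dots,d_n)$, whose homogeneous coordinate ring is generated by elements $z_{i,a}\mapsto y_i\,s^a t^{d_i-a}$ for $0\le a\le d_i$: writing each monomial $s^c t^{d_i+d_j-c}$ as a product $z_{i,a}z_{j,b}$, the biform $q=\sum_{i,j}A_{ij}(s,t)\,y_iy_j$ is a genuine degree-two element of $\R[X]$.

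Now $X$ is a nondegenerate irreducible variety of minimal degree of dimension $n$ (a rational normal scroll, or a cone over one when some $d_i=0$), and it has dense real points since it is parametrized over $\R$. Because the parametrized real points are dense, continuity shows that $q$ is nonnegative on $X(\R)$ if and only if $A(s,t)\succeq 0$ for all real $(s,t)$, which holds by hypothesis. Theorem~\ref{Thm:varmin} therefore writes $q$ as a sum of $\dim(X)+1=n+1$ squares of linear forms $\ell_1,\dots,\ell_{n+1}\in\R[X]$. Each such linear form pulls back to $\ell_k=\sum_i y_i\,(b_k)_i(s,t)$ with $(b_k)_i$ homogeneous of degree $d_i$, so that $\ell_k^2=y^T b_k b_k^T y$ and hence $A=\sum_{k=1}^{n+1}b_k b_k^T=BB^T$ with $B=[\,b_1\mid\cdots\mid b_{n+1}\,]$ of size $n\times(n+1)$.

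I expect the degree-balancing lemma to be the main obstacle, since it is precisely what guarantees that $A$ corresponds to a quadratic form on a scroll of the correct dimension $n$, so that $\dim(X)+1$ is exactly $n+1$; the remaining work is the essentially formal dictionary between sum-of-squares representations of $q$ on $X$ and low-rank factorizations $A=BB^T$, together with the routine verification that the scroll meets the hypotheses of Theorem~\ref{Thm:varmin}.
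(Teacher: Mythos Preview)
Your proposal is correct and takes essentially the same approach as the paper's proof: establish the degree condition $\deg A_{ij}=d_i+d_j$ from the nonnegativity of the $2\times 2$ principal minors, interpret $A$ as a nonnegative quadratic form on an $n$-dimensional rational normal scroll, and apply Theorem~\ref{Thm:varmin}. The paper's proof is a four-line sketch that only hints at the minor argument and the toric description of the scroll; you have simply filled in the details (the reduction for zero diagonal entries, both directions of the degree-balancing lemma, and the explicit translation between the sum-of-squares decomposition of $q$ and the factorization $A=BB^T$).
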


A bound of $2n$ instead of $n+1$ was shown by Choi, Lam, and Reznick \cite{ChoiLamReznickMR566480}, among others \cite{JakubovicMR0271129, RosenblumRovnyakMR0273437}.
The improvement to $n+1$, which is tight generically, was observed by Leep in \cite{Leep} using techniques from the theory of quadratic forms. Our approach shows furthermore that there are only finitely many inequivalent representations as sums of $n+1$ squares of generic nonnegative quadratic forms on a rational normal scroll.

We also extend the result of Powers, Reznick, Scheiderer, and Sottile in \cite{PowersReznickScheidererSottileMR2103198} on the number of inequivalent representations as a sum of three squares to all surfaces of minimal degree. 
By the classification of varieties of minimal degree, a surface of minimal degree is either a quadratic hypersurface in $\P^3$, the Veronese surface in $\P^5$, corresponding to ternary quartics, or a rational normal scroll. 

\begin{Thm*}[{Theorem~\ref{thm:allSurfaces}}]
Let $X \subset \P^n$ be a nondegenerate irreducible surface of minimal degree with dense real points. Then a generic quadratic form nonnegative on $X$ has exactly $2^{n-2}$ inequivalent representations as a sum of three squares.
\end{Thm*}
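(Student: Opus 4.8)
The plan is to recast the count of inequivalent sum-of-three-squares representations as a problem about Gram matrices, and then to carry it out family by family using the Del Pezzo--Bertini classification. Writing $S=\R[X]$, a representation $q=\ell_1^2+\ell_2^2+\ell_3^2$ with $\ell_i\in S_1$ is recorded by its Gram matrix $G\in\Sym^2(S_1)$, the symmetric tensor mapping to $q$ under the multiplication map $\Sym^2(S_1)\to S_2$; two representations are equivalent exactly when they have the same Gram matrix, and a representation by three squares corresponds to a positive semidefinite $G$ of rank at most $3$. Because $X$ is projectively normal, the kernel of $\Sym^2(S_1)\to S_2$ is the space $I(X)_2$ of quadrics vanishing on $X$, which for a surface of minimal degree has dimension $\binom{n-1}{2}$; hence the Gram matrices of a fixed $q$ form an affine space of this dimension, while the symmetric $(n+1)\times(n+1)$ matrices of rank at most $3$ form a variety of codimension $\binom{n-1}{2}$. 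The expected dimension of the intersection is $0$, so the first step is to confirm that a generic $q$ has only finitely many rank-$3$ Gram matrices. Equivalently, there are finitely many $3$-planes $W\in\gr(3,S_1)$ with $q\in\operatorname{im}(\Sym^2 W\to S_2)$: when this map is injective, its $6$-dimensional image imposes $3n-6=3(n-2)$ conditions on $q$ inside the $3n$-dimensional space $S_2$, matching $\dim\gr(3,S_1)=3(n-2)$.

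I would then split along the classification. The Veronese surface in $\P^5$ is the case of ternary quartics, where the count $2^{5-2}=8$ is precisely the theorem of Powers, Reznick, Scheiderer and Sottile, which I would cite; it is governed by the (even) theta characteristics of the associated plane quartic. The smooth quadric in $\P^3$ is the scroll $S(1,1)$, so the remaining and main case is a rational normal scroll. For $X=S(a,b)$ with $a+b=n-1$ I would use the matrix-polynomial dictionary behind Corollary~\ref{cor:noncomm}, specialized to surfaces: a quadratic form on $X$ is encoded by a symmetric $2\times 2$ matrix polynomial $A(s,t)=\left(\begin{smallmatrix}p&q\\ q&r\end{smallmatrix}\right)$ with $\deg p=2a$, $\deg r=2b$, $\deg q=n-1$, positive semidefinite for all real $(s,t)$; a linear form $\ell_i$ becomes a column $b_i=(f_i,g_i)^{T}$ with $\deg f_i=a$, $\deg g_i=b$, and the representation $q=\sum_i\ell_i^2$ becomes a factorization $A=BB^{T}$ with $B=(b_1\ b_2\ b_3)$ a real $2\times 3$ matrix, taken up to right multiplication by $O(3)$.

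The crux is to count these minimal real factorizations. By the Cauchy--Binet formula $\det A=\sum_{i<j}m_{ij}^2$, where $m_{ij}=f_ig_j-f_jg_i$ is a binary form of degree $n-1$; thus $\det A$ is a nonnegative binary form of degree $2(n-1)$, which for generic $q$ is strictly positive on $\mathbb{RP}^1$ and hence has $n-1$ distinct conjugate pairs of complex roots. Such a form has exactly $2^{n-2}$ spectral factorizations $\det A=P\overline P$ with $\deg P=n-1$, obtained by selecting one root from each conjugate pair and identifying $P$ with $\overline P$; the governing group is $(\Z/2)^{n-1}/(\Z/2)\cong(\Z/2)^{n-2}$. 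The heart of the argument is to show that assigning to each positive semidefinite factorization $A=BB^{T}$ the spectral factorization determined by the way $B$ degenerates at the roots of $\det A$ sets up a bijection with this set; granting it, the number of inequivalent representations is $2^{n-2}$, in agreement with the uniform value $2^{\codim X}$ that also produces the Veronese count.

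I expect the main obstacle to be exactly this bijection: reconstructing a real, positive semidefinite, rank-$3$ factorization $B$ from a spectral factorization of $\det A$ and proving both injectivity and surjectivity, all while controlling definiteness, the rank, and the degree splitting $(a,b)$. This is where the genericity hypotheses enter, and where I anticipate using the theory of quadratic forms over the function field $\R(t)$ in the spirit of Leep's argument, tracking the square classes of the irreducible real quadratic factors of $\det A$. To finish I would verify that distinct spectral factorizations yield genuinely inequivalent representations, and that for generic $q$ no representation of rank smaller than $3$ occurs---consistent with the sharpness of Theorem~\ref{Thm:varmin}---so that the rank-$3$ count is exactly $2^{n-2}$.
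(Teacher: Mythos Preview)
Your case split along the classification and the citation for the Veronese surface match the paper, which likewise assembles the theorem from pieces: the Veronese by citation to \cite{PowersReznickScheidererSottileMR2103198}, smooth scrolls by its own Theorem~\ref{Thm:realsosrep}, and cones over rational normal curves by a separate Schur-complement reduction (Lemma~\ref{lem:cone}) to the classical $2^{d-1}$ count for sums of two squares of positive binary forms of degree~$2d$. You omit this last singular case; your $2\times 2$ matrix model for $S(a,b)$ with $a,b\ge 1$ covers only smooth scrolls.

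For smooth scrolls your proposed route is genuinely different from the paper's, but the key bijection does not work as stated. You want to read off a spectral factorization of $\det A$ from ``the way $B$ degenerates at the roots of $\det A$,'' yet at any such root $\zeta$ the rank-one matrix $B(\zeta)$ has its column space in $\C^2$ and the kernel of $B(\zeta)^T$ determined entirely by $A(\zeta)$, while the kernel of $B(\zeta)$ in $\C^3$ is not invariant under the right $O(3)$-action; no local invariant at the branch points distinguishes inequivalent factorizations. Likewise, Cauchy--Binet only lands $\det A=\sum_{i<j} m_{ij}^2$ in the \emph{continuous} family of rank-three Gram matrices of the binary form $\det A$, not among its finitely many spectral factorizations. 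The paper instead passes to the hyperelliptic curve $C=X\cap\V(f)$ of genus $g=n-2$ and establishes (Theorems~\ref{Thm:complexcount} and~\ref{Thm:realsosrep}) a bijection between representations $f=pq+r^2$ and two-torsion classes on the Jacobian of $C$ via $[E]=[\tfrac12\div_C(p)-\tfrac12\div_C(p_0)]$; a sign homomorphism $({\rm Cl}\,C)(\R)\to\{\pm1\}$, controlled by results of Scheiderer, then singles out exactly $2^g=2^{n-2}$ sum-of-squares representations among the real ones. The roots of $\det A$ are precisely the branch points of the hyperelliptic cover, so your numerical guess $2^{n-2}$ is consistent with this picture, but turning it into an actual bijection requires the global divisor-class argument rather than local degeneration data.
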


The case of the Veronese surface was already solved in \cite{PowersReznickScheidererSottileMR2103198}, so we concentrate on the rational normal scrolls.
We also count the number of representations of a general quadratic form as a sum and difference of $\dim(X)+1$ squares over $\R$ and the number of representations as a sum of $\dim(X)+1$ squares over $\C$.

\begin{Thm*}[{Theorems~\ref{Thm:complexcount} and \ref{Thm:realsosrep}}]
Let $f$ be a generic quadratic form on a two-dimensional smooth real rational normal scroll $X\subset \P^n$. Then $f$ has exactly $2^{2(n-2)}$ inequivalent representations as a sum of three squares over $\C$. If $n$ is even, then all real representations of $f$ are sums of three squares and there are $2^{n-2}$ inequivalent such representations. If $n$ is odd, then there are $2^{n-1}$ inequivalent real representations, with $2^{n-2}$ as sums of three squares and $2^{n-2}$ as sums and differences of three squares.
\end{Thm*}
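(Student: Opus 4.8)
The plan is to reduce the enumeration of representations to the enumeration of low-rank Gram matrices, and then to translate the latter into a question about two-torsion on an auxiliary curve. First I would set up the Gram correspondence: writing a linear form as $\ell=\sum_j L_j x_j$ in the $n+1$ coordinates of $\P^n$, a representation $f=\sum_{i=1}^3\ell_i^2$ corresponds to a symmetric $(n+1)\times(n+1)$ matrix $G=LL^{T}$ of rank three with $x^{T}Gx\equiv f$ in $\R[X]_2$, and two representations are equivalent exactly when they differ by the action of $O(3)$ on $(\ell_1,\ell_2,\ell_3)$, that is, exactly when they produce the same $G$. Thus inequivalent representations over $\C$ (resp.\ over $\R$) biject with complex symmetric (resp.\ real symmetric) rank-three Gram matrices of $f$; among the real ones the positive semidefinite matrices give genuine sums of three squares and those of signature $(2,1)$ give sums and differences of three squares, while a nonzero nonnegative $f$ admits no negative semidefinite Gram matrix, and no matrix of signature $(1,2)$, because the real points of $X$ are dense. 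The Gram matrices of a fixed $f$ form an affine space whose dimension equals the number $\binom{n-1}{2}$ of quadratic relations of $X$, and the symmetric matrices of rank $\le 3$ have codimension $\binom{n-1}{2}$ in all symmetric $(n+1)\times(n+1)$ matrices, so for generic $f$ the intersection is finite; one must still check that it is reduced and consists of points of rank exactly three, which is a routine genericity verification that licenses all the counts below. I would emphasize that the relation space of the scroll is special, so the count is \emph{not} the generic determinantal degree and must be obtained geometrically.

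For the complex count I would attach to $f$ the curve $Z=\{f=0\}\cap X$. Using $X=\P(\mathcal{O}(a)\oplus\mathcal{O}(b))$ with $a+b=n-1$ and writing a quadratic form as $s^2A+2stB+t^2C$ in the fibre coordinates, $Z$ is the double cover of $\P^1$ cut out by $As^2+2Bst+Ct^2=0$, branched over the $2(n-1)$ zeros of $\det\!\left(\begin{smallmatrix}A&B\\ B&C\end{smallmatrix}\right)$; equivalently $Z$ is the hyperelliptic curve $w^2=B^2-AC$, of genus $g=n-2$ by Riemann--Hurwitz. The key step is to identify the rank-three Gram matrices with the geometry of $Z$: the one-dimensional kernel of the matrix pencil over each point of $Z$ defines a line bundle, and a factorization $G=LL^{T}$ amounts to selecting a square root of one fixed line bundle on $Z$. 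This realizes the set of complex rank-three representations as a torsor under $\operatorname{Jac}(Z)[2]$, of cardinality $2^{2g}=2^{2(n-2)}$. Establishing this bijection rigorously, rather than merely matching cardinalities, is the main obstacle on the complex side, and is where I expect to invoke the determinantal-representation machinery together with the dimension count of the previous paragraph to exclude excess and lower-rank contributions.

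For the real count I would study the action of complex conjugation on $\operatorname{Jac}(Z)[2]$. Since a generic nonnegative $f$ is positive definite as a matrix polynomial on $\P^1(\R)$, the form $B^2-AC$ is strictly negative there, so the $2(n-1)=2g+2$ branch points fall into $g+1$ complex-conjugate pairs and $Z(\R)=\emptyset$. The two-torsion of the hyperelliptic Jacobian is the group of even subsets of the branch points modulo complementation, and conjugation acts by the product of these $g+1$ transpositions. A class is real exactly when a representing subset is conjugation-invariant or is sent to its complement: the invariant even subsets are the $2^{g}$ unions of full pairs (up to complementation), and the transversal subsets meeting each pair once contribute a further $2^{g}$ classes precisely when their cardinality $g+1$ is even, i.e.\ when $n$ is odd. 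Hence, once we know the torsor has real points (guaranteed by Theorem~\ref{Thm:varmin}), the number of real representations is $2^{g}=2^{n-2}$ for $n$ even and $2^{g+1}=2^{n-1}$ for $n$ odd, matching the stated totals.

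It remains to split the real representations by signature, which I expect to be the most delicate point. The plan is to match the two families of real two-torsion points with the two signatures: the conjugation-invariant (full-pair) classes should correspond to positive semidefinite Gram matrices, hence to honest sums of three squares, and the transversal classes to indefinite Gram matrices of signature $(2,1)$, hence to sums and differences of three squares. For $n$ even only the first family exists, so every real representation is a sum of three squares, giving $2^{n-2}$; for $n$ odd both families appear with $2^{n-2}$ classes each, yielding $2^{n-2}$ sums of squares and $2^{n-2}$ sums and differences. I would prove the signature assignment by tracking eigenvalue crossings along real paths in the Gram affine space between the boundary strata of the Gram spectrahedron, modelled on the one-parameter case of a quadric in $\P^3$, where $\det(G_0+\lambda Q)$ has four real roots, two on the spectrahedron (signature $(3,0)$) and two outside (signature $(2,1)$); an alternative is to read off the signature from the reality type of the square-root bundle. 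Checking that this assignment is exhaustive and consistent with the guarantee that nonnegative forms are sums of three squares is the final and hardest ingredient.
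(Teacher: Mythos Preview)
Your overall strategy---identify the rank-three Gram matrices with two-torsion on the Jacobian of the hyperelliptic curve $Z=X\cap\V(f)$ of genus $g=n-2$, then analyse complex conjugation on $\operatorname{Jac}(Z)[2]$ via the branch-point description---is exactly the route the paper takes. Your combinatorial count of real two-torsion (full-pair subsets versus transversals, with the parity of $g+1$ governing the second family) is correct and in fact makes explicit what the paper obtains by citing Scheiderer's results on the sign character.

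Two places where the paper's execution differs from your sketch deserve comment. First, you call the genericity check ``routine,'' but the paper shows it is not: smoothness of $Z$ is \emph{not} enough to guarantee that every two-torsion class yields an honest representation. There exist smooth curves for which some two-torsion points produce only a degenerate identity $pq+r^2=0$ in $\C[X]$ rather than $pq+r^2=f$ (the paper gives an explicit example with only $60$ of the expected $64$ representations). The paper isolates the extra genericity condition by analysing when a hyperplane section of $Z$ of the form $2D$ forces $D$ to contain two ramification points, and shows by a differential/dimension argument that this locus is a hypersurface in $\C[X]_2$. Your ``reduced and rank exactly three'' check must confront this phenomenon.

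Second, for the signature split the paper does not track eigenvalue crossings. Instead it defines, for any divisor class $[D]$ with $D\sim\ol D$, a sign ${\rm sgn}(c_D)\in\{\pm1\}$ via $c_D=h_D\ol{h_D}$ where $\div(h_D)=\ol D-D$; this gives a group homomorphism $(\mathrm{Cl}\,Z)(\R)\to\{\pm1\}$, and a direct computation shows that the Gram matrix attached to $D$ is positive semidefinite precisely when ${\rm sgn}(c_D)=-1$, equivalently when $[D-D_0]$ lies in the kernel of the sign map. That kernel is exactly the set of conjugation-invariant classes containing a conjugation-invariant divisor---your ``full-pair'' classes---so your guessed correspondence is correct, but the sign-character argument is both cleaner and more robust than an eigenvalue deformation, which would have to be made uniform across all $2^{g}$ or $2^{g+1}$ points.
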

In the proof, we relate representations as sums of three squares to two-torsion points on the Jacobian of the smooth curve associated to a generic quadratic form. This extends an observation due to Coble \cite{CobleMR733252}, also employed in \cite{PowersReznickScheidererSottileMR2103198}. For ternary quartics, the smoothness of the curve makes the form sufficiently generic for the count to work. The case of rational normal scrolls is more delicate and we need to make further genericity assumptions, as illustrated by Example~\ref{Exm:generic}.

For higher dimensions, we conjecture the following.
\begin{Conj}
Let $X\subset \P^n$ be a nondegenerate irreducible variety of minimal degree with dense real points. Then a generic quadratic form nonnegative on $X$ has exactly $2^{\codim(X)}$ inequivalent representations as a sum of $\dim(X)+1$ squares.
\end{Conj}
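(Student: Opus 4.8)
The plan is to reduce the statement to a question about a specific family of symmetric matrices and then to isolate its real structure. Write $c=\codim(X)=n-\dim(X)$, let $V=\R[X]_1$ (so $\dim V=n+1$), and let $\mu\colon\Sym^2 V\to\R[X]_2$ be the multiplication map, whose kernel $(I_X)_2$ has dimension $\binom{c+1}{2}$ because $X$ is cut out by that many quadrics. A representation of $f$ as a sum of $\dim(X)+1$ squares is then the same as a positive semidefinite Gram matrix $G\in\Sym^2 V$ with $\mu(G)=f$ of rank $\dim(X)+1$, that is, of corank $c$; two representations are equivalent exactly when their Gram matrices agree. So I would count the corank-$c$ positive semidefinite matrices in the affine space $\mu^{-1}(f)$ of dimension $\binom{c+1}{2}$. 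Each such $G$ is a quadric of rank $\dim(X)+1$ restricting to $f$ on $X$, hence a cone whose vertex is a $(c-1)$-plane and which contains the quadric section $Y=X\cap\{f=0\}$. Since every lift of $f$ agrees with $f$ on $X$, the count sees only $Y$, which suggests it depends on $c$ alone. This matches the known data: for $c=0$ (so $X=\P^{\dim X}$) a positive definite form has a unique Gram matrix; for $c=1$ and $X=\{q=0\}$ the lifts form the pencil $f+\lambda q$, whose positive semidefinite members fill an interval whose two endpoints are the two corank-one representations; and $\dim(X)=1$ recovers the classical count $2^{\,n-1}=2^c$ for positive binary forms.

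Over $\C$ I would first pin down the complex count, which is subtle because the space $(I_X)_2$ sits in special position: already for a scroll surface in $\P^4$ the number of complex corank-$2$ lifts is $16$, not the degree $20$ of the symmetric determinantal variety of rank-$3$ matrices, so one cannot simply intersect $\mu^{-1}(f)$ with a determinantal locus. The surface proof instead identifies rank-three quadrics through the associated curve with two-torsion classes on its Jacobian, following Coble, and I would look for a finite abelian group of order $2^{2c}$ acting simply transitively on the corank-$c$ quadric cones through $Y$. The most promising uniform route is an induction on $c$ (for fixed $\dim X$) with base case $c=0$: inner projection from a general point of $X$ carries a variety of minimal degree of codimension $c$ to one of codimension $c-1$, and I would show that this projection matches each representation downstairs with exactly two representations upstairs over $\R$ (four over $\C$). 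Iterating reproduces $2^c$ over $\R$ and $2^{2c}$ over $\C$ as an accumulation of $c$ independent binary choices, which is precisely the rank-$c$ two-torsion pattern seen for surfaces.

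To pass from the complex to the real count $2^c$, I would study the Gram spectrahedron $\{G\succeq 0:\mu(G)=f\}$ and show that exactly $2^c$ of its corank-$c$ members are real and positive semidefinite. The real corank-$c$ lifts split into the positive semidefinite ones (genuine sums of squares) and the indefinite ones (sums and differences of squares); Theorem~\ref{Thm:varmin} guarantees that the positive semidefinite locus is nonempty, and I would promote this to the full count by a connectedness and deformation argument: vary $f$ through generic nonnegative forms, track how the real corank-$c$ points move without colliding, and show that the positive semidefinite ones form a distinguished coset of size $2^c$ inside the real two-torsion. The parity phenomenon in Theorems~\ref{Thm:complexcount} and~\ref{Thm:realsosrep}, where the parity of $n$ decides whether indefinite representations occur at all, should be the template for identifying this coset, with the genericity hypotheses controlled as forced by Example~\ref{Exm:generic}.

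The hard part will be the higher-dimensional replacement for the curve-and-Jacobian dictionary. When $\dim(X)\ge 3$ the quadric section $Y$ is no longer a curve, so it is not clear which abelian variety (an intermediate Jacobian, a Prym, or an object assembled inductively) carries the relevant two-torsion and acts on the set of corank-$c$ quadric cones through $Y$. Equally delicate is making the inner-projection induction rigorous: proving that the induced correspondence on representations is everywhere exactly two-to-one over $\R$ (four-to-one over $\C$), that it is compatible with the positive semidefinite locus, and that the requisite genericity survives projection. I expect these two points — identifying the two-torsion group in general, and controlling the fibers of the projection together with their reality — to be where the essential difficulty lies.
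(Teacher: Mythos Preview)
The statement you are attempting to prove is stated in the paper as a \emph{Conjecture}, not a theorem; the paper does not prove it. It only verifies the cases $\dim(X)=1$ (via the classical count for positive binary forms) and $\dim(X)=2$ (via Theorem~\ref{thm:allSurfaces}), and reports computational evidence for threefolds. So there is no ``paper's own proof'' to compare against, and your write-up is, as you yourself signal, a research plan rather than a proof.

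That said, there is a concrete reason your proposed inductive mechanism cannot work as stated. Your inner-projection step posits that each representation downstairs lifts to exactly four complex representations upstairs, which after $c$ steps yields $2^{2c}$ complex representations. But the paper explicitly records that the complex count is \emph{not} uniformly $2^{2c}$: among surfaces of codimension $3$ in $\P^5$, the Veronese surface gives $63$ complex rank-three Gram matrices and the cone over $\nu_4(\P^1)$ gives $35$, while only the two scrolls give $64$ (see Table~\ref{table:genus3} and the remark following the Conjecture). Hence a uniform four-to-one complex lifting under inner projection is impossible in general, and any argument built on that premise will fail for the Veronese surface and for cones. Your background computations (the dimension $\binom{c+1}{2}$ of $(I_X)_2$, the identification of representations with corank-$c$ Gram matrices, the checks for $c\le 1$ and for scrolls) are correct, but the core inductive step needs a different idea---one that produces exactly $2^c$ real positive semidefinite representations without forcing a uniform complex count.
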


The conjecture holds for $\dim(X)=1$, by \cite[Example 2.13]{ChoiLamReznickMR1327293}, 
and $\dim(X)=2$, by Theorem~\ref{thm:allSurfaces}. 
For threefolds, we have some computational evidence.

We find it remarkable that the number of sum-of-squares representations over $\C$
is not as regular as the number of sum-of-squares representations over $\R$, see Example~\ref{exm:repsforsurf}. 
Algebraic intuition would suggest the exact opposite, but we have no general conjecture 
that would incorporate the number 63 from the case of ternary quartics.

\bigskip \textit{Acknowledgments:} We would like to thank Claus
Scheiderer for fruitful discussions and the referee for helpful comments
on the presentation. Grigoriy Blekherman and Rainer
Sinn were supported by NSF grant DMS-0757212. Daniel Plaumann
gratefully acknowledges financial support from the Zukunftskolleg of
the University of Konstanz and DFG grant PL 549/3-1. Cynthia Vinzant was supported by NSF grant DMS-1204447 and the FRPD program at North Carolina State University. 

\section{The Minimal Length of Sum-of-Squares Representations}
Let $X\subset \P^n$ be an irreducible real projective variety of
dimension $m$. Assume that $X(\R)$ is Zariski-dense in $X$ and that
$X$ is nondegenerate (i.e.~not contained in a proper subspace) and of
minimal degree (i.e.~$\deg(X)=\codim(X)+1$).

\begin{Thm}\label{Thm:varmin}
Every quadratic $f\in\R[X]_2$ such that $f(x)\geq 0$ for every $x\in X(\R)$ is a sum of $(\dim(X) + 1)$ squares of linear forms in $\R[X]_1$.
\end{Thm}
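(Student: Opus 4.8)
The plan is to recast the statement in terms of the \emph{sum-of-squares map} and then to argue by a dimension count together with a connectedness argument in the spirit of Hilbert. Write $V=\R[X]_1$, so that $\dim V=n+1$, and consider
\[
\Phi\colon V^{\,m+1}\to\R[X]_2,\qquad (\ell_1,\dots,\ell_{m+1})\mapsto \sum_{i=1}^{m+1}\ell_i^2 .
\]
Its image $\Sigma_{m+1}$ is a cone contained in the cone $P$ of quadratic forms nonnegative on $X(\R)$. Since $X(\R)$ is dense and nondegenerate, the only positive semidefinite form in $I_X(2)$ is zero, so $\Phi$ is proper onto $P$ and $\Sigma_{m+1}$ is closed. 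As $\Sigma_{m+1}\subseteq P$ trivially and $P=\overline{\topint(P)}$, it suffices to prove $\topint(P)\subseteq\Sigma_{m+1}$; taking closures then gives $P\subseteq\Sigma_{m+1}$ (reproving the relevant inclusion of \cite{BSV}). Concretely, I must produce for each strictly positive $f$ a \emph{real, positive semidefinite} Gram matrix of $f$ of rank at most $m+1$.

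First I would check that $m+1$ is the critical number by a dimension count. The group $O(m+1)$ acts on $V^{\,m+1}$ preserving $\Phi$, so the effective dimension of the source is $(m+1)(n+1)-\binom{m+1}{2}$. Because $X$ has minimal degree it is cut out by quadrics, with $\dim I_X(2)=\binom{\codim(X)+1}{2}$, and hence
\[
\dim\R[X]_2=\binom{n+2}{2}-\binom{n-m+1}{2}=(m+1)(n+1)-\binom{m+1}{2}.
\]
Thus source and target are equidimensional (for ternary quartics both sides equal $15$, recovering Hilbert's setup). Over $\C$ this makes $\Phi$ dominant and generically finite, so a generic form has only finitely many Gram representatives of rank $\le m+1$, and wherever the products $\ell_i\cdot\R[X]_1$ ($i=1,\dots,m+1$) span $\R[X]_2$, the differential $d\Phi$ is surjective and $\Phi$ is an open submersion.

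Next I would run the connectedness argument. The set $\topint(P)$ is convex, hence connected. I would show that $\Sigma_{m+1}\cap\topint(P)$ is (i) closed in $\topint(P)$, which is immediate from closedness of $\Sigma_{m+1}$, and (ii) dense in $\topint(P)$: one strictly positive sum $\sum\ell_i^2$ whose summands have no common real zero on $X$ exhibits a point of $\Sigma_{m+1}\cap\topint(P)$, and the submersion property spreads any such representation to a full neighborhood. Connectedness together with closedness then forces $\Sigma_{m+1}\cap\topint(P)=\topint(P)$. The engine throughout is that, at a representation where $d\Phi$ is surjective, reality and positive semidefiniteness of the rank-$(m+1)$ Gram matrix propagate across $\topint(P)$.

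The main obstacle, exactly as in Hilbert's original argument, is reality: the equidimensional count only guarantees finitely many rank-$(m+1)$ Gram matrices over $\C$, and I must show that for every generic nonnegative $f$ at least one of them is real and positive semidefinite, with $d\Phi$ surjective so that the representation is stable. I expect this to require the geometry of varieties of minimal degree in an essential way---that $X(\R)$ is dense, that a general hyperplane section $X\cap H$ is again of minimal degree of dimension $m-1$, and that the defining quadrics are explicit---so that the nonnegative forms failing transversality form a proper real subvariety that can be crossed without losing a real positive semidefinite representative. A naive induction on $\dim(X)$ via hyperplane sections is tempting but breaks down: lifting a representation of $f|_{X\cap H}$ and completing one square only yields $f=\sum_{i=1}^{m+1}\ell_i^2+H\cdot\ell$ with an uncontrolled cross term $H\cdot\ell$ coming from the relations in $I_X(2)$, and it is precisely the nonnegativity of $f$ that one must leverage to absorb this term.
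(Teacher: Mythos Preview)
Your framework is the right one and matches the paper's: define the sum-of-squares map $\Phi$, verify the dimension count, and run an open--closed argument on a connected piece of the positive cone. The properness/closedness of the image and the computation $\dim\R[X]_2=(m+1)(n+1)-\binom{m+1}{2}$ are exactly what the paper uses.

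The gap is in step (ii). Exhibiting \emph{one} base-point-free representation and spreading it to a neighborhood shows only that $\Sigma_{m+1}\cap\topint(P)$ has nonempty interior; closedness plus connectedness of $\topint(P)$ does not then force equality. You need $\Sigma_{m+1}\cap U$ to be \emph{open} in some connected open $U$, i.e.\ that \emph{every} $q$ in the intersection is the image of a point where $d\Phi$ is surjective. Your last two paragraphs show you sense this, but you misdiagnose the obstacle as ``reality'' and start looking for a way to pick out a real PSD Gram matrix among finitely many complex ones, or to induct on hyperplane sections. None of that is needed.

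The missing observation is this: if $q=\sum_{i}\ell_i^2$ and the $\ell_i$ have a common (complex) zero $p\in X$, then $p$ is a singular point of $X\cap\V(q)$. So if you replace $\topint(P)$ by the smaller set
\[
\mathcal P=\{\,q\in\R[X]_2:\ q>0\text{ on }X(\R)\text{ and }X\cap\V(q)\text{ is smooth}\,\},
\]
then \emph{every} representation of any $q\in\mathcal P\cap\mim(\Phi)$ is automatically base-point-free, hence $d\Phi$ is surjective there, hence $\mathcal P\cap\mim(\Phi)$ is open in $\mathcal P$. The set $\mathcal P$ is still connected (the singular locus has real codimension at least two, since a complex singularity forces one at the conjugate point as well) and dense in the nonnegative cone, so the open--closed argument goes through without ever leaving $\R$. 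Your condition ``no common \emph{real} zero'' is also too weak: base-point-freeness over $\C$ is what makes $(\ell_0,\dots,\ell_m)$ a regular sequence in the (arithmetically Cohen--Macaulay) ring $\R[X]$ and drives the syzygy count behind the surjectivity of $d\Phi$.
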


In the proof below, we generalize Hilbert's proof of the fact that every nonnegative ternary quartic is a sum of three squares, 
see \cite{HilbertMR1510517}. For a rigorous and modern presentation of Hilbert's proof, see Swan \cite{SwanMR1803372}.
We consider the map 
\begin{equation}
\phi \ \ \colon \left\{
  \begin{array}{l}
\R[X]_1 \times \dots \times \R[X]_1 \to \R[X]_2\\
 (\ell_0,\hdots,\ell_{m})  \ \mapsto \  \sum_{i=0}^{m} \ell_i^2
  \end{array}
\right. \label{eq:phi}
\end{equation}
The theorem is equivalent to the statement that the image of this map is equal to the convex cone of nonnegative quadratic forms in $\R[X]_2$. 
\begin{Lem}\label{lem:proper} Let $\phi$ be the map defined in \eqref{eq:phi}.
\begin{itemize}
\item[(a)] The map $\phi$ is proper and closed.
\item[(b)] The differential of $\phi$ is surjective at every point
$(\ell_0,\hdots,\ell_{m})$ that gives a base-point-free linear system on
$X$, i.e.~for which $X\cap \V(\ell_0,\hdots,\ell_{m})$ is empty.
\end{itemize}
\end{Lem}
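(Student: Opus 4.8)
My plan is to treat the two parts with entirely different tools: a compactness argument for (a), and a Hilbert-function computation for (b).

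For part (a), I would first record that $\phi$ is homogeneous of degree $2$ and that its only zero is the origin. Indeed, if $\sum_i \ell_i^2 = 0$ in $\R[X]_2$, then this quadratic form vanishes identically on the Zariski-dense set $X(\R)$, which forces each $\ell_i$ to vanish on $X(\R)$ and hence to be zero in $\R[X]_1$. Fixing any norms on the finite-dimensional spaces $\R[X]_1$ and $\R[X]_2$, the continuous function $\|\phi\|$ attains a strictly positive minimum $c>0$ on the unit sphere of $(\R[X]_1)^{m+1}$, so by homogeneity $\|\phi(\ell_0,\dots,\ell_m)\| \ge c\,\|(\ell_0,\dots,\ell_m)\|^2$ for every input. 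This lower bound shows that preimages of bounded sets are bounded, whence $\phi$ is proper; and a proper continuous map into a locally compact Hausdorff space such as the finite-dimensional vector space $\R[X]_2$ is automatically closed.

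For part (b), the differential at $(\ell_0,\dots,\ell_m)$ sends $(h_0,\dots,h_m)$ to $\sum_i 2\ell_i h_i$, so its image is the degree-$2$ piece $I_2$ of the ideal $I=(\ell_0,\dots,\ell_m)$ in $R:=\R[X]$, and surjectivity is equivalent to $(R/I)_2=0$. The base-point-free hypothesis $X\cap\V(\ell_0,\dots,\ell_m)=\emptyset$ says exactly that the $\ell_i$ have no common zero away from the cone point, so the $m+1$ linear forms $\ell_0,\dots,\ell_m$ form a homogeneous system of parameters in the $(m+1)$-dimensional ring $R$. Since $X$ is a variety of minimal degree, $R$ is \acm, and in a Cohen-Macaulay ring every system of parameters is a regular sequence; consequently the Hilbert series of $R/I$ is $H_R(t)\,(1-t)^{m+1}$.

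At this point I would invoke the standard description of the Hilbert series of a variety of minimal degree, namely $H_R(t)=\bigl(1+(\deg(X)-1)t\bigr)/(1-t)^{m+1}$, which can be read off from its linear (Eagon--Northcott) resolution. Multiplying out yields $H_{R/I}(t)=1+(\deg(X)-1)\,t$, a polynomial with no term in degree $2$; hence $(R/I)_2=0$ and the differential is surjective. I expect the main obstacle to be precisely this final computation, that the Hilbert function of $R/I$ vanishes in degree $2$: this is where the minimal-degree hypothesis enters decisively, since for a variety of higher degree the $h$-polynomial would have larger degree and the differential could genuinely fail to be surjective. Because the regular-sequence step rests on arithmetic Cohen-Macaulayness, I would be careful to cite that property of minimal-degree varieties (see \cite{EisenbudHarris}) explicitly.
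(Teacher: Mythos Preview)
Your proposal is correct and rests on the same two pillars as the paper's proof: for (a), homogeneity together with $\phi^{-1}(0)=\{0\}$; for (b), the fact that on an {\acm} variety a homogeneous system of parameters is a regular sequence. The presentations differ only in how each step is packaged. In (a) the paper projectivizes and uses that a continuous map between compact Hausdorff spaces is proper and closed, whereas you extract a norm bound on the unit sphere; these are interchangeable. In (b) the paper computes the \emph{kernel} of the differential---the degree-one syzygies among the $\ell_i$---and uses that a regular sequence has only the Koszul relations, obtaining $\rk(\mathrm{d}\phi)=(m+1)(n+1)-\binom{m+1}{2}$, which it then matches with $\dim\R[X]_2$ via the ``quadratic deficiency zero'' statement from \cite{BSV}. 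You instead compute the \emph{cokernel} directly as $(R/I)_2$ and read off $h_2=0$ from the $h$-polynomial $1+(\deg X-1)t$ of a minimal-degree variety. Your route is slightly more self-contained, since the vanishing of $h_2$ already encodes the quadratic-deficiency fact; the paper's syzygy count has the side benefit of making the fiber dimension $\binom{m+1}{2}$ explicit, which is reused in Corollary~\ref{cor:finitereps}. A small remark: your observation that $h_0+h_1=\deg(X)$ together with nonnegativity of the $h$-vector (which follows from ACM) already forces $h_i=0$ for $i\ge 2$, so you do not strictly need to invoke the Eagon--Northcott resolution.
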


\begin{proof}
(a) The map $\phi$ is homogeneous and $\phi(\ell_0,\hdots,\ell_{m})\neq 0$ whenever $(\ell_0,\hdots,\ell_{m})\neq 0$. So we can view it as a continuous map from $\P(\R[X]_1\times\dots\times\R[X]_1)$ to $\P(\R[X]_2)$, where we take the Euclidean topology on both projective spaces. As a continuous map between compact Hausdorff spaces, it is both proper and closed.\\
(b) The differential at $(\ell_0,\ldots,\ell_{m})$ is the map
\begin{equation*}
{\rm d}\phi \colon \left\{
  \begin{array}{l}
\R[X]_1  \times \dots \times \R[X]_1 \to \R[X]_2 \\
(h_0,\hdots,h_{m}) \ \mapsto \ 2\sum_{i=0}^{m} h_i\ell_i.
  \end{array}\right.
\end{equation*}
We can count the dimension of the image by counting the syzygies among the linear forms $\ell_0,\ldots,\ell_{m}$. The assumption $X\cap \V(\ell_0,\hdots,\ell_{m}) = \emptyset$ implies that $\ell_0,\hdots,\ell_{m}$ is a homogeneous system of parameters in $\R[X]$. Since $X$ is {\acm} by \cite[Theorem 4.2]{EisGotoMR741934}, this homogeneous system of parameters is also a regular sequence. So the only syzygies among the $\ell_i$ are the obvious ones
$\ell_i \ell_j = \ell_j \ell_i$ for $i\neq j$. 
 Therefore, the rank of the differential at $(\ell_0, \hdots, \ell_m)$ is
\begin{equation}\label{eq:dimcount}
(m+1)\dim(\R[X]_1) - \binom{m+1}{2} \ = \ (m+1)(n+1) - \binom{m+1}{2} \ = \ \dim(\R[X]_2).
\end{equation}
The last equality holds because $X$ is a variety of minimal degree and therefore, its quadratic deficiency $\epsilon(X)$ is $0$; see Blekherman-Smith-Velasco \cite[Section~3]{BSV}.
\end{proof}

\noindent We can now finish the proof with the same topological argument used by Hilbert.
\begin{proof}[Proof of Theorem~\ref{Thm:varmin}]
Let $\mathcal{P}\subset \R[X]_2$ be the set of all strictly positive quadrics $q$ such that
$X\cap \V(q)$ is smooth. It is open and connected because the set of all strictly positive quadrics
with a complex singularity has codimension at least two in $\R[X]_2$, as any such quadric must also be singular
at the complex conjugate of the singularity.  
By Lemma~\ref{lem:proper}, the set $\mathcal{P}\cap \mim(\phi)$ is a closed subset of $\mathcal{P}$. On the other hand, it is also open in $\mathcal{P}$ because every $q\in \mathcal{P}\cap \mim(\phi)$ is the sum of squares of a regular sequence. Indeed, if $\ell_0,\ldots,\ell_m$ have a common zero on $X$, then $q=\ell_0^2+\hdots+\ell_m^2$ would be singular at this point. So $q$ is an interior point of $\mathcal{P}\cap \mim(\phi)$ by the implicit function theorem and Lemma~\ref{lem:proper}(b). Since $\mathcal{P}$ is connected, we conclude $\mathcal{P}\subset \mim(\phi)$. Since $\mathcal{P}$ is dense in the cone of nonnegative polynomials and $\mim(\phi)$ is closed, we conclude that every nonnegative quadratic is a sum of $m+1$ squares of linear forms.
\end{proof}

\begin{Def}\label{def:Equiv}
  We say that two representations
  $q=\ell_0^2+\hdots+\ell_m^2=\ell_0^{\prime 2}+\hdots+\ell_m^{\prime 2}$ are
  \emph{equivalent} if there exists an orthogonal $(m+1)\times(m+1)$ matrix $O$ such that
\[
\begin{pmatrix}
\ell_0 & \ell_1 & \hdots & \ell_m
\end{pmatrix}^T = 
O \begin{pmatrix}
\ell_0' & \ell_1' & \hdots & \ell_m'
\end{pmatrix}^T.
\]
The equivalence of sum-of-squares representations,
interpreted as quadratic forms, can also be understood in terms of
their representing matrices. Explicitly, a \emph{Gram matrix} of $q\in \R[X]_2$ 
is a $(n+1)\times (n+1)$ symmetric matrix $A$ for which 
\[
q \ =  \ (x_0,\hdots,x_n) A (x_0,\hdots,x_n)^T \ \ \text{ in } \ \  \R[X]_2.
\]
\end{Def}
If $A$ is positive semidefinite and has rank $m+1$, then $A$ decomposes as $A=BB^T$ with $B$ of
size $(n+1)\times (m+1)$. This gives rise to a representation of $q$ as a sum of $m+1$ squares. 
Two representations are equivalent if and only if they come from two decompositions of the same Gram
matrix. One can check that an indefinite Gram matrix corresponds to an equivalence class of
representations as a sum and difference of squares, and Gram matrix with complex entries corresponds to an equivalence class of 
representations as a sums of squares over $\C$. For more details, see \cite{GramSpec}.

\begin{Cor}\label{cor:finitereps}
There are finitely many equivalence classes of representations of a general quadratic $q\in \mathcal{P}$ as a sum of $m+1$ squares.
That is, a generic $q\in \mathcal{P}$ has finitely many Gram matrices of rank $(m+1)$. 
\end{Cor}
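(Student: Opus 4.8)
The plan is to deduce finiteness from the differential analysis already established in Lemma~\ref{lem:proper}, using a dimension count plus a properness argument. The key observation is that equivalence classes of sum-of-squares representations of a fixed $q$ correspond exactly to fibers of the map $\phi$ from \eqref{eq:phi}, modulo the action of the orthogonal group $O(m+1)$ which acts on tuples $(\ell_0,\ldots,\ell_m)$ by $\phi(O\cdot\ell)=\phi(\ell)$. So I would first record that $\phi$ is $O(m+1)$-invariant, and that two representations lie in the same fiber $\phi^{-1}(q)$ if and only if they are related by some element of $O(m+1)$ precisely when they are equivalent in the sense of Definition~\ref{def:Equiv}. Thus counting equivalence classes amounts to counting $O(m+1)$-orbits in the fiber $\phi^{-1}(q)$.

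\medskip

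\noindent First I would verify the dimensions match. The domain of $\phi$ has dimension $(m+1)(n+1)$, and by \eqref{eq:dimcount} the rank of $\mathrm{d}\phi$ at a base-point-free tuple equals $\dim(\R[X]_2)$, so the generic fiber of $\phi$ over its image has dimension $(m+1)(n+1)-\dim(\R[X]_2)=\binom{m+1}{2}=\dim O(m+1)$. This is exactly the dimension of the orbits of the (free, away from degenerate tuples) $O(m+1)$-action, which strongly suggests that the fibers are finite unions of orbits. Concretely, for a generic $q\in\mathcal P$ (which by the proof of Theorem~\ref{Thm:varmin} lies in $\mim(\phi)$ and has only base-point-free preimages), the fiber $\phi^{-1}(q)$ is a smooth manifold of dimension $\binom{m+1}{2}$ on which $O(m+1)$ acts with orbits of the same dimension; hence the orbits are open in the fiber, the quotient is discrete, and since $\phi$ is proper by Lemma~\ref{lem:proper}(a) the fiber is compact, forcing the number of orbits to be finite.

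\medskip

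\noindent The step I expect to require the most care is ensuring that the genericity hypotheses line up: I must guarantee that for generic $q$ every point of the fiber $\phi^{-1}(q)$ is base-point-free (so that Lemma~\ref{lem:proper}(b) applies pointwise and the fiber is smooth everywhere of the expected dimension), and that the $O(m+1)$-action on the fiber is locally free so that orbits genuinely have the full dimension $\binom{m+1}{2}$. The first point follows from the argument in the proof of Theorem~\ref{Thm:varmin}: if some $\ell_i$ had a common zero on $X$ then $q=\sum\ell_i^2$ would be singular there, contradicting $q\in\mathcal P$. The second is a stabilizer computation showing that no nontrivial orthogonal transformation fixes a base-point-free tuple $(\ell_0,\ldots,\ell_m)$ whose entries span a large enough space. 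The cleanest way to finish is to translate everything into the Gram-matrix language from Definition~\ref{def:Equiv}: equivalence classes of representations are in bijection with positive semidefinite Gram matrices of $q$ of rank $m+1$, and finiteness of these follows because they are the rank-$(m+1)$ points of the (finite-dimensional) affine-linear space of all Gram matrices of $q$ cut out by the rank condition, which for generic $q$ is a zero-dimensional variety by the dimension count above.

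\medskip

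\noindent Combining these observations yields the corollary: the generic fiber $\phi^{-1}(q)$ is a compact smooth manifold partitioned into finitely many open $O(m+1)$-orbits, so there are finitely many equivalence classes, equivalently finitely many positive semidefinite rank-$(m+1)$ Gram matrices of $q$.
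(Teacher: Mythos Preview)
Your proposal is correct and follows essentially the same line as the paper's proof: both use the dimension count \eqref{eq:dimcount} to see that the fiber $\phi^{-1}(q)$ has dimension $\binom{m+1}{2}=\dim O(m+1)$, observe that the $O(m+1)$-action is free on linearly independent tuples (which any base-point-free tuple is, being a regular sequence), and conclude finiteness of orbits. You make the compactness step (via properness, Lemma~\ref{lem:proper}(a)) more explicit than the paper does, but the underlying argument is the same.
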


\begin{proof}
  The dimension count in equation (\ref{eq:dimcount}) at the end of
  the proof of Lemma~\ref{lem:proper} shows that the fiber
  $\phi^{-1}(q)$ of a general quadratic form $q\in\mathcal{P}$ has
  dimension $\binom{m+1}{2}$, see for example \cite[\S 2, Lemma
  1]{MilnorMR1487640}. Since the orthogonal group of
  $(m+1)\times (m+1)$ matrices has dimension $\binom{m+1}{2}$ and acts
  faithfully on linearly independent linear forms, we have only
  finitely many orbits in such a fiber. The equivalent statement for Gram matrices 
  follows from the discussion above. 
\end{proof}

\begin{Cor}[to Theorem~\ref{Thm:varmin}]\label{cor:noncomm}
Let $A$ be a symmetric $n\times n$ matrix whose entries are homogeneous polynomials in two variables $s$ and $t$. Suppose that $A(s,t)$ is a positive semidefinite matrix for every $(s,t)\in\R^2$. Then there is a matrix $B$ of size $n\times(n+1)$ with polynomial entries such that $A = B B^T$.
\end{Cor}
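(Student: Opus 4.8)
The plan is to translate the desired matrix factorization into a sum-of-squares problem for a single quadratic form on a rational normal scroll, and then invoke Theorem~\ref{Thm:varmin}. First I would introduce auxiliary variables $u_1,\dots,u_n$ and pass to the biform $F(s,t,u)=u^T A(s,t)\,u=\sum_{i,j}A_{ij}(s,t)u_iu_j$. The hypothesis that $A(s,t)$ is positive semidefinite for every $(s,t)\in\R^2$ is exactly the statement that $F(s,t,u)\geq 0$ for all real $(s,t,u)$. After normalizing so that all entries $A_{ij}$ are homogeneous of a common even degree $2d$, the biform $F$ is bihomogeneous of degree $(2d,2)$ in $\bigl((s,t),u\bigr)$.

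Next I would realize $F$ as a quadratic form on a scroll. Let $X\subset\P^{(d+1)n-1}$ be the image of $\P^1\times\P^{n-1}$ under the complete linear system $|\mathcal{O}(d,1)|$, whose sections are spanned by the monomials $s^at^bu_i$ with $a+b=d$. Then $\R[X]_1$ consists of biforms of degree $(d,1)$ and $\R[X]_2$ of biforms of degree $(2d,2)$, so $F$ defines an element $f\in\R[X]_2$. I would check that $X$ is a nondegenerate irreducible rational normal scroll: it is cut out by the complete linear system, has $\dim(X)=n$, and an intersection-number computation gives $\deg(X)=nd=\codim(X)+1$, so $X$ is of minimal degree. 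Being smooth and rationally parametrized over $\R$, it has dense real points. Thus $X$ meets the hypotheses of Theorem~\ref{Thm:varmin}, and $f$ is nonnegative on $X(\R)$.

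Finally I would apply the theorem and read off $B$. By Theorem~\ref{Thm:varmin}, $f=\sum_{k=0}^{n}g_k^2$ for linear forms $g_k\in\R[X]_1$, each of the shape $g_k=\sum_{i=1}^n b_{ik}(s,t)\,u_i$ with $b_{ik}$ homogeneous of degree $d$ in $s,t$. Expanding $\sum_k g_k^2$ and comparing coefficients of $u_iu_j$ yields $A_{ij}=\sum_{k=0}^{n}b_{ik}b_{jk}$, that is $A=BB^T$ with $B=(b_{ik})$ of size $n\times(n+1)$ and polynomial entries, as required.

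The main obstacle is the second step: verifying that this scroll is genuinely a variety of minimal degree with dense real points, and pinning down the precise dictionary identifying $\R[X]_2$ with bihomogeneous forms of degree $(2d,2)$ so that squares of elements of $\R[X]_1$ correspond exactly to rank-one outer products $b_k b_k^T$. I also need to address the degree normalization: if the diagonal entries $A_{ii}$ carry distinct degrees $2d_i$, the balanced scroll $\P^1\times\P^{n-1}$ is replaced by the rational normal scroll attached to the degree sequence $(d_1,\dots,d_n)$, parametrized by forms $b_{ik}$ of degree $d_i$, and the identical coefficient-matching argument produces the factorization with $\deg b_{ik}=d_i$.
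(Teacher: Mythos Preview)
Your approach is essentially the paper's: interpret $u^T A(s,t)\,u$ as a nonnegative quadratic form on an $n$-dimensional rational normal scroll and invoke Theorem~\ref{Thm:varmin}. The paper's proof is just a terse version of what you wrote, describing the scroll as the toric variety of a truncated prism over the $(n-1)$-simplex with heights $d_i$.

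The one point where you are vaguer than the paper is the degree bookkeeping. Your phrase ``normalizing so that all entries $A_{ij}$ are homogeneous of a common even degree $2d$'' is not an operation you can actually perform, and in the unbalanced case you need to know that the off-diagonal entry $A_{ij}$ has degree exactly $d_i+d_j$ (or is zero) for $u^T A u$ to land in $\R[X]_2$ for the scroll attached to $(d_1,\dots,d_n)$. The paper supplies this missing step: it follows from positive semidefiniteness via the $2\times 2$ principal minors, since $A_{ij}(s,t)^2\le A_{ii}(s,t)A_{jj}(s,t)$ for all $(s,t)\in\R^2$, and homogeneity of both sides forces $2\deg A_{ij}=\deg A_{ii}+\deg A_{jj}$ whenever $A_{ij}\neq 0$ (scale $(s,t)$ by $\lambda\to 0$ and $\lambda\to\infty$ along a generic ray). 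With that observation in hand, your final paragraph is exactly what is needed.
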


\begin{proof}
This follows from Theorem~\ref{Thm:varmin} with the following observation:
If the $i$th diagonal entry has degree $2 d_i$ for $i=1,\hdots,n$, then the $(i,j)$th entry must have degree $d_i+d_j$, because $A$ is everywhere positive semidefinite. This can be proved by looking at symmetric $2\times 2$ minors of $A$. Therefore, $A$ defines a quadratic form on a rational normal scroll of dimension $n$. This variety of minimal degree can be realized as a toric variety whose corresponding polytope is a truncated prism over the $(n-1)$-dimensional standard simplex with heights $d_i$ at the vertices.
\end{proof}

\section{The number of representations for surfaces}\label{sec:count}
The smooth surfaces of minimal degree are the quadratic hypersurfaces in $\P^3$, the Veronese surface in $\P^5$ corresponding to ternary quartics, and the two-dimensional rational normal scrolls, which are toric embeddings of the Hirzebruch surfaces.
These toric embeddings are given by special lattice polytopes $P\subset\R^2$. The corresponding polynomials are biforms of bidegree $(2,2d)$, which are polynomials whose Newton polytope is contained in $2P$. 
For our count of the number of sum-of-squares representations of
biforms of bidegree $(2,2d)$, we relate such representations to
two-torsion points on the Jacobian of the hyperelliptic curve defined
by a biform in $\P^1\times \P^1$.

Following the approach by toric geometry in \cite{CoxLittleSchenckMR2810322}, we identify a smooth 
rational normal scroll by two positive integers $d \geq e$, which define the polytope
\[
P_{d,e}  \ = \  \conv\{(0,0),(0,1),(d,0),(e,1)\}.
\]
The associated projective toric variety is the Zariski closure of the image of the map
\begin{equation}\label{eq:toric}
(\C^\ast)^2 \to \P^{d+e+1}, \ \  (s,x)\mapsto (1:s:s^2:\ldots:s^d:x:xs:xs^2:\ldots:xs^e).
\end{equation}
We can count the number of representations of biforms with Newton polytope $2P_{d,e}$ in terms of these defining positive integers $d$ and $e$.  
We begin by counting the representations over the complex numbers.

\begin{Thm}\label{Thm:complexcount}
Let $d\geq e$ be positive integers and let $f$ be a generic polynomial with Newton polytope $2 P_{d,e}$. 
Then over $\C$, $f$ has exactly $2^{2 g}$ inequivalent representations as a sum of three squares of forms with Newton polytope $P_{d,e}$, where $g = d+e-1$.
\end{Thm}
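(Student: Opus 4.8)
The plan is to realize each sum-of-three-squares representation of $f$ over $\C$ as a square root of a fixed line bundle on the smooth curve cut out by $f$, and then to identify the set of such square roots with the $2$-torsion of its Jacobian. First I would fix $C = X\cap\V(f)$, which for generic $f$ is, by Bertini, a smooth irreducible curve in the linear system $|2L|$, where $L$ is the very ample class giving the scroll embedding \eqref{eq:toric}. Adjunction, together with the fact that a hyperplane section of the surface scroll is a rational normal curve (so $K_X\cdot L = -(d+e+2)$ and $L^2 = d+e$), gives $2g_C-2 = (K_X+2L)\cdot 2L = 2(d+e)-4$, whence $g_C = d+e-1 = g$. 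Restricting the ruling $\pi\colon X\to\P^1$ to $C$ yields a map of degree $C\cdot F = 2L\cdot F = 2$, so $C$ is hyperelliptic; its $2g+2$ branch points are the roots of the discriminant $\Delta = B^2-4AC$ of $f = A(s)+B(s)x+C(s)x^2$, distinct for generic $f$. This is the hyperelliptic curve attached to the biform.

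Next I would turn a representation into a line bundle. Given $f=\ell_0^2+\ell_1^2+\ell_2^2$ with $\ell_k\in\C[X]_1 = H^0(X,L)$ base-point-free (the generic case), the three forms define a morphism $\psi=(\ell_0:\ell_1:\ell_2)\colon X\to\P^2$ with $\psi^*\mathcal{O}(1)=L$, and $C=\psi^{-1}(\mathcal{Q})$ for the smooth conic $\mathcal{Q}=\V(z_0^2+z_1^2+z_2^2)\cong\P^1$. Composing $\psi|_C$ with $\mathcal{Q}\cong\P^1$ produces $\phi\colon C\to\P^1$, and I set $\mathcal{L}=\phi^*\mathcal{O}_{\P^1}(1)$. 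Since $\mathcal{O}_{\P^2}(1)|_{\mathcal{Q}}\cong\mathcal{O}_{\P^1}(2)$, this $\mathcal{L}$ satisfies $\mathcal{L}^{\otimes2}\cong L|_C$ and has degree $g+1$, so it is a square root of the fixed bundle $L|_C$. By Definition~\ref{def:Equiv} two representations are equivalent exactly when they differ by $O(3,\C)$, which acts on $\mathcal{Q}$ through its automorphism group $\mathrm{PGL}_2(\C)$ and hence leaves $\mathcal{L}$ unchanged; thus we obtain a well-defined map from equivalence classes of representations to the set of square roots of $L|_C$ in $\operatorname{Pic}(C)$.

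The heart of the argument is to invert this construction. Given a square root $\mathcal{L}$ of $L|_C$, Riemann--Roch gives $h^0(\mathcal{L})\geq 2$; assuming $h^0(\mathcal{L})=2$ and $|\mathcal{L}|$ base-point-free, $|\mathcal{L}|$ is a $g^1_{g+1}$ defining $\phi\colon C\to\P^1$, and the image of $\Sym^2 H^0(\mathcal{L})\to H^0(\mathcal{L}^{\otimes2})=H^0(L|_C)$ is a three-dimensional space $V$ carrying an intrinsic conic (its rank-one locus). Because $L$ is ample on the rational surface $X$, Kodaira vanishing gives $H^1(X,-L)=0$, so the restriction $H^0(X,L)\to H^0(C,L|_C)$ is surjective and $V$ lifts to three forms $\ell_0,\ell_1,\ell_2\in\C[X]_1$. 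By construction $\sum\ell_k^2$ restricts to $0$ on $C$, hence lies in the kernel of $H^0(X,2L)\to H^0(C,2L|_C)$, which is $H^0(X,\mathcal{O}_X)\cong\C$ and is spanned by $f$; after scaling we get $\sum\ell_k^2=f$. This recovers a representation mapping to $\mathcal{L}$, and the $O(3,\C)$-ambiguity of the lift shows the correspondence is a bijection. Finally, the square roots of $L|_C$ form a torsor under $\operatorname{Jac}(C)[2]\cong(\Z/2)^{2g}$, giving exactly $2^{2g}$ classes.

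The main obstacle is the genericity input needed to make the inverse construction apply to all $2^{2g}$ square roots simultaneously: I must show that for generic $f$ every square root $\mathcal{L}$ of $L|_C$ satisfies $h^0(\mathcal{L})=2$ with $|\mathcal{L}|$ base-point-free, i.e.\ that none of the finitely many classes $\mathcal{L}$ meets the relevant special loci in $\operatorname{Pic}^{g+1}(C)$ (equivalently $h^0(K_C-\mathcal{L})=0$ for the degree-$(g-3)$ bundle $K_C-\mathcal{L}$, and no base point). On a hyperelliptic curve these conditions can be read off explicitly from the $g^1_2$ and the Weierstrass points, but controlling all $2^{2g}$ shifts at once requires either a monodromy argument as $f$ varies or a direct analysis of the position of these $2$-torsion translates relative to the theta divisor. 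This is precisely the point where genericity of the biform is indispensable, as the failure illustrated in Example~\ref{Exm:generic} shows; everything else is either standard intersection theory on Hirzebruch surfaces or formal torsor bookkeeping.
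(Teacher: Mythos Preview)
Your approach is essentially the same as the paper's: both set up a bijection between equivalence classes of representations and square roots of $L|_C$, i.e.\ two-torsion points of $\operatorname{Jac}(C)$. The paper works with the form $f = pq+r^2$ and the half-hyperplane divisor $\frac12\div_C(p)$, while you package the same data via the map to the conic $\mathcal Q\subset\P^2$ and the pullback $\mathcal L = \phi^*\mathcal O_{\P^1}(1)$; these are interchangeable. Your use of the canonical inverse construction (lift the image of $\Sym^2 H^0(\mathcal L)$ through the isomorphism $H^0(X,L)\cong H^0(C,L|_C)$ and read off the unique quadratic relation) is a clean substitute for the paper's Proposition~\ref{prop:equivalence}, which instead proves ``same divisor class $\Rightarrow$ equivalent representations'' by a finiteness-plus-continuity argument relying on Corollary~\ref{cor:finitereps}.

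The one substantive point you leave open is exactly the one the paper works hardest on. You frame the obstacle as ``$h^0(\mathcal L)=2$ and $|\mathcal L|$ base-point-free for every square root,'' but the paper isolates a subtly different failure mode: even after constructing linear forms $p,q,r$ on $X$ with $pq+r^2 \equiv 0$ on $C$, the lifted relation in $\C[X]_2$ may be the \emph{zero} multiple of $f$ rather than a nonzero one. Lemma~\ref{Lem:zeroSOS} analyzes this degeneration (the biform $p$ must contain a doubled ruling line), and Lemma~\ref{Lem:ZeroIff} characterizes it as the condition that $|\tfrac12\div_C(p_0)|$ contains a divisor $R_1+R_2+G$ with $R_1,R_2$ Weierstrass points, then rules it out for generic $f$ by a concrete dimension count on linear forms whose zero locus on $X_P$ contains two lines of the ruling. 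One can check that your base-point-free hypothesis actually implies the paper's $\gamma\neq 0$ (the common factor $wp_2$ produced in Lemma~\ref{Lem:zeroSOS} forces a base locus when $h^0(\mathcal L)=2$), so your formulation is correct; but the proof that it holds generically---your ``monodromy argument or direct analysis''---is precisely what Lemma~\ref{Lem:ZeroIff} supplies and is not a routine step.
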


Theorem~\ref{Thm:complexcount} is proved on page~\pageref{proof:complexcount}. Note that the polynomial $f$ is not homogeneous. We bi-homogenize $f$ in $s$ and $x$ with homogenizing variables $t$ and $y$ to a biform of degree $2d$ in $s$ and $t$ and degree two in $x$ and $y$. In matrix form, this means
\begin{equation}\label{eq:biform}
f \  =  \
\begin{pmatrix}
x & y
\end{pmatrix}
\begin{pmatrix}
a(s,t) & b(s,t) \\
b(s,t) & c(s,t)
\end{pmatrix}
\begin{pmatrix}
x \\ y
\end{pmatrix},
\end{equation}
where $a$, $b$, and $c$ are bivariate forms of degree $2d$, $a$ is divisible by $t^{2(d-e)}$, and $b$ is divisible by $t^{d-e}$.
This biform defines a curve in $\P^1 \times \P^1$. If $d\neq e$, then a generic biform with Newton polytope $2P_{d,e}$ defines a singular curve in $\P^1\times \P^1$. We can embed the smooth model of this curve in the toric variety associated 
with the polytope $P_{d,e}$.  
\begin{Lem}\label{lem:curve}
Let $f$ be a generic biform with Newton polytope $2P_{d,e}$. The smooth model of the curve $\V(f)\subset\P^1\times \P^1$ has genus $g = d+e-1$ and can be embedded as a curve of degree $2(d+e)$ in $\P^{d+e+1}$ as the intersection of the rational normal scroll defined by $P_{d,e}$ and a quadric given by $f$. This embedding of the curve is {\acm} and projectively normal.
\end{Lem}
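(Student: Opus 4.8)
The plan is to construct the asserted embedding directly on the scroll and only afterwards recognize it as the smooth model of $\V(f)$. Write $S=S_{d,e}\subset\P^{d+e+1}$ for the rational normal scroll attached to $P_{d,e}$, with hyperplane class $H$, and let $\tilde f$ be the quadratic form in the scroll coordinates $s^ut^{d-u}y$ $(0\le u\le d)$ and $s^ut^{d-u}x$ $(0\le u\le e)$ that pulls back to $f$. The first thing I would check is that such a $\tilde f$ exists: since $P_{d,e}$ is a smooth, hence normal, lattice polygon, we have $2P_{d,e}\cap\Z^2=(P_{d,e}\cap\Z^2)+(P_{d,e}\cap\Z^2)$, and a short monomial bookkeeping shows that the divisibilities $t^{2(d-e)}\mid a$ and $t^{d-e}\mid b$ are exactly what force every monomial of $f$ to be a product of two scroll coordinates. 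Set $C:=S\cap\V(\tilde f)$. As $S$ has degree $d+e$ and $C$ lies in $|2H|$, its degree is $2H^2=2(d+e)$, as required.

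For the genus and for smoothness I would exploit the ruling projection $\pi\colon S\to\P^1$. Restricted to $C$ it is a double cover, because $C\cdot F=2H\cdot F=2$ for a fiber $F$, and its branch locus is the discriminant of $\tilde f$ along each ruling, namely $\V(D)$ with $D:=\tilde a\,c-\tilde b^{\,2}$, where $a=t^{2(d-e)}\tilde a$ and $b=t^{d-e}\tilde b$. A degree count gives $\deg D=2(d+e)$, reduced for generic $f$; crucially, the extra even factor $t^{2(d-e)}$ of the full discriminant $ac-b^2$ produces the base point $((1:0),(1:0))$ of the scroll linear system rather than a branch point, so it drops out. Riemann--Hurwitz then yields $2g-2=2(-2)+2(d+e)$, i.e.\ $g=d+e-1$; the same value follows from adjunction on the Hirzebruch surface $\mathbb{F}_{d-e}$, writing $H=C_0+dF$, $C=2H$, and $K_S+C=(d+e-2)F$. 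Smoothness of $C$ for generic $f$ is Bertini, since $|2H|$ is very ample and, by projective normality of $S$, a generic $f$ cuts out a generic member of $|2H|$. Finally, the monomial map $\varphi\colon\P^1\times\P^1\dashrightarrow S$ given by these sections is birational and carries $\V(f)$ onto $C=S\cap\V(\tilde f)$; as $C$ is smooth and birational to $\V(f)$, it is the smooth model.

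It remains to show that $C$ is {\acm} and projectively normal. Here I would use that $S$ is {\acm}, as every variety of minimal degree is by \cite[Theorem 4.2]{EisGotoMR741934}, so its homogeneous coordinate ring $R_S$ is Cohen--Macaulay and a domain. Because $\V(\tilde f)$ does not contain $S$, the class of $\tilde f$ is a nonzerodivisor in $R_S$, and quotienting a Cohen--Macaulay ring by a regular element preserves Cohen--Macaulayness; thus $R_S/(\tilde f)$ is Cohen--Macaulay of dimension $2$. Since $C$ is smooth this quotient is generically reduced, hence reduced, so it is the homogeneous coordinate ring of $C$, which is therefore Cohen--Macaulay: $C$ is {\acm}. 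Projective normality is then automatic, because $C$ is smooth, so its affine cone is regular in codimension one, and {\acm}, so the cone satisfies Serre's condition $(S_2)$; by Serre's criterion the cone is normal.

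The step I expect to require the most care is the identification of $C$ with the smooth model of $\V(f)$, precisely because $\varphi$ is only a rational map, undefined at $((1:0),(1:0))$ --- which is the singular point of $\V(f)$ when $d>e$. Making the identification rigorous means tracking the elementary transformation that resolves this base point, passing from $\P^1\times\P^1$ to $\mathbb{F}_{d-e}$ by blowing up $((1:0),(1:0))$ and checking that the strict transform of $\V(f)$ is smooth there for generic $f$, so that its two analytic branches (reflecting the even exponent $t^{2(d-e)}$) are separated and map to two distinct points of $C$ over $(1:0)$. Equivalently, one verifies that the singularity of $\V(f)$ at $((1:0),(1:0))$ has $\delta$-invariant $d-e$, reconciling the arithmetic genus $(2d-1)(2-1)=2d-1$ of a bidegree $(2d,2)$ curve on $\P^1\times\P^1$ with the geometric genus $d+e-1$. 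Everything else is either a routine intersection number on a Hirzebruch surface or the standard regular-sequence argument for hypersurface sections of {\acm} varieties.
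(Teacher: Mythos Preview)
Your argument is correct and complete, but it follows a markedly different route from the paper's proof. The paper handles the numerical invariants in one stroke via Hilbert functions: it quotes that the Hilbert polynomial of the toric surface $X_P$ equals the Ehrhart polynomial of $P_{d,e}$, namely $p_X(\mathfrak{t})=\tfrac12(d+e)\mathfrak{t}^2+\tfrac12(d+e+2)\mathfrak{t}+1$, and then reads off both the degree and the genus of $C$ from $p_C(\mathfrak{t})=p_X(\mathfrak{t})-p_X(\mathfrak{t}-2)=2(d+e)\mathfrak{t}+(2-d-e)$. You instead compute the degree by intersection theory on the scroll ($2H^2$) and the genus by Riemann--Hurwitz for the double cover $\pi|_C\colon C\to\P^1$, with adjunction on $\mathbb{F}_{d-e}$ as a check. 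For the \acm\ and projectively normal claims, the paper simply cites that $X_P$ is \acm\ (an exercise in Cox--Little--Schenck) and that \acm\ curves are projectively normal (an exercise in Eisenbud); you unwind both references, invoking \cite{EisGotoMR741934} for the scroll, the regular-element argument for the hypersurface section, and Serre's criterion for normality of the cone.

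Each approach has its merits. The Ehrhart/Hilbert computation is shorter and stays entirely within the toric/commutative-algebra framework already set up in the paper. Your geometric route is longer but more transparent, and in particular your identification of $\pi|_C$ as a double cover branched at the $2(d+e)$ roots of $\tilde a\,c-\tilde b^{\,2}$ anticipates exactly the hyperelliptic structure the paper exploits later (after Lemma~\ref{Lem:ZeroIff}) without deriving it. Your careful discussion of the base point $((1{:}0),(1{:}0))$ and the elementary transformation to $\mathbb{F}_{d-e}$ also makes explicit what the paper sweeps into the single sentence ``since $\V(f)\subset\P^1\times\P^1$ and $C$ are birational and $C$ is smooth, $C$ is indeed an embedding of the smooth model of $\V(f)$.''
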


\begin{proof}
Set $P = P_{d,e}$ and let $X_P\into \P^{d+e+1}$ be the projective toric variety defined by $P$ as in \eqref{eq:toric}.
The Hilbert polynomial of the surface $X_P$ is equal to the Ehrhart polynomial of the polytope $P$, see \cite[Proposition 9.4.3 and Corollary 2.2.19]{CoxLittleSchenckMR2810322}, and the Ehrhart polynomial of $P$ is
\[
p_X(\mathfrak{t})  \ = \  \frac12 (d+e) \mathfrak{t}^2 + \frac12 (d+e+2) \mathfrak{t} + 1.
\]
The coefficients of $f$ define a quadric in $\P^{d+e+1}$. Let $C$ be the intersection of $X_P$ with this quadric. Since $f$ is generic, $C$ is smooth and nondegenerate by Bertini's theorem \cite[Th\'eor\`eme 6.2]{JouMR725671}. Since $\V(f)\subset \P^1\times\P^1$ and $C$ are birational and $C$ is smooth, $C$ is indeed an embedding of the smooth model of $\V(f)$.

We can compute the genus and degree of $C$ by computing the Hilbert polynomial $p_C$ of the curve, which is
\[
p_C(\mathfrak{t}) \ =  \ p_X(\mathfrak{t}) - p_X(\mathfrak{t}-2)  \ = \ 2(d+e) \mathfrak{t} + (2-d-e).
\]
So the genus of $C$ is $g = d+e-1$ and the degree is $2(d+e)$. 

The curve is \acm, because the toric surface $X_P$ is by \cite[Exercise 9.2.8]{CoxLittleSchenckMR2810322}. Every curve that is {\acm} is projectively normal, see \cite[Exercise 18.16]{EisenbudMR1322960}. 
\end{proof} 

From now on, we fix the integers $d$ and $e$ and simply write $P$ for
$P_{d,e}$. We identify a biform $f$ with Newton polytope $2P$ with a
quadratic form in $\C[X_P]_2$ and the smooth model of the curve
$\V(f)\subset \P^1 \times \P^1$ with the intersection $C$ of $X_P$ and
the quadratic form in $\C[X_P]$ corresponding to $f$. We also identify
biforms with Newton polytope $P$ with linear forms in $\C[X_P]_1$. So
a representation $f = \ell_1^2 + \ell_2^2 + \ell_3^2$ of a biform is a
representation of the quadratic form $f$ as a sum of three squares of
linear forms in $\C[X_P]$. Over the complex numbers, such a
representation is equivalent to $f = pq+r^2$, where
$p = (\ell_1 + i \ell_2)$, $q = (\ell_1 - i \ell_2)$, and
$r = \ell_3$. We will consider this type of representation, sometimes called a quadratic representation of $f$, from now on.

Given a linear form $p\in\C[X_P]_1$, the intersection of $C$ with the
hyperplane given by $p$ defines a divisor on $C$ which we denote $\div_C(p)$.

\begin{Lem}\label{lem:Divisor}
  If $f=pq+r^2$, the divisor $\div_C(p)$ is
  even, i.e.~there exists a divisor $D$ on $C$ such that $\div_C(p) = 2 D$, and the linear system $|\frac 12\div_C(p)| = |D|$ is base-point-free.
\end{Lem}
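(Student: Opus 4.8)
The plan is to work entirely on the curve $C$, using that $C$ lies on the quadric $\V(f)$ so that $f$ vanishes identically on $C$ and the representation becomes the relation $pq=-r^2$ between sections of $\mathcal O_C(2)$. First I would record the immediate consequence $\div_C(p)+\div_C(q)=2\,\div_C(r)$, together with the pointwise remark that for $x\in C$ the vanishing $p(x)=0$ forces $r(x)^2=f(x)-p(x)q(x)=0$, hence $r(x)=0$. This gives $\operatorname{supp}\div_C(p)\subseteq\operatorname{supp}\div_C(r)$ but not yet evenness, so a finer local argument is needed.

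For evenness the key idea is to restrict to the hyperplane $H=\V(p)$: on $H$ one has $f=pq+r^2=r^2$, so writing $Y=X_P\cap H$ for the corresponding (possibly singular) curve on the scroll, the section $f$ restricts to $r^2$ on $Y$. I would then compute the local intersection number of the two curves $C$ and $Y$ at a point $x\in C\cap Y$ in two ways. Because $X_P$ is smooth and $C=\V(f)$, $Y=\V(p)$ are Cartier divisors meeting in dimension zero at $x$ (as $p$ does not vanish on the irreducible curve $C$), both the coefficient $\mathrm{ord}^C_x(p)=\dim_\C \mathcal O_{X_P,x}/(f,p)$ of $x$ in $\div_C(p)$ and the number $\mathrm{ord}^Y_x(f)=\dim_\C \mathcal O_{X_P,x}/(f,p)$ equal the same length. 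Using $f|_Y=r^2|_Y$ and that $r$ is a nonzerodivisor on $Y$ near $x$ (otherwise a component of $Y$ through $x$ would lie on $\V(f)=C$, forcing $C\subseteq Y$), the exact sequence $0\to\mathcal O_{Y,x}/(r)\xrightarrow{\,\cdot r\,}\mathcal O_{Y,x}/(r^2)\to\mathcal O_{Y,x}/(r)\to 0$ yields $\mathrm{ord}^Y_x(r^2)=2\,\mathrm{ord}^Y_x(r)$. Thus every coefficient of $\div_C(p)$ is even and $\div_C(p)=2D$ with $D$ effective. I emphasize that this step is robust and does not require $H$ to be a generic hyperplane, only $X_P$ smooth.

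It remains to show that $|D|=|\tfrac12\div_C(p)|$ is base-point-free, and I expect this to be the main obstacle, since it is precisely where the genericity of $f$ must enter. By Riemann--Roch, $\deg D=d+e=g+1$ gives $h^0(D)=2+h^0(K_C-D)\ge 2$, so $D$ does move. Suppose $x$ were a base point; then $x\in\operatorname{supp}D$, so $p(x)=0$ and hence $r(x)=0$, while genericity (the forms of a representation of a generic $f$ have no common zero on $C$, in the spirit of Lemma~\ref{lem:proper}(b)) forces $q(x)\neq 0$, so $x\notin\operatorname{supp}D_q$ where $\div_C(q)=2D_q$. The plan is to translate ``$x$ is a base point'' into the failure of global generation of $\mathcal O_C(D)$ at $x$ and to show this imposes a proper closed condition on $f$. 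The natural framework is the correspondence assigning to a representation the square root $\mathcal O_C(D)$ of $\mathcal O_C(1)$ (with $\mathcal O_C(D)\otimes\mathcal O_C(D_q)^{-1}$ a $2$-torsion class), under which the finitely many representations of a fixed generic $f$ become its effective square roots; one then argues that the locus of $f$ admitting an effective square root with a base point is a proper subvariety of the parameter space. The hyperelliptic structure is a useful tool: the ruling $X_P\to\P^1$ restricts to the degree-two map $C\to\P^1$, so its fiber class $H$ is the hyperelliptic pencil and $2D\sim\mathcal O_C(1)$ can be analyzed through the involution and $|H|$. The delicate point — and the reason the scroll case is harder than the Veronese case, cf.\ Example~\ref{Exm:generic} — is that $D$ is a genuine square root involving $2$-torsion, not the restriction of a line bundle from the scroll, so base-point-freeness cannot be read off the ambient geometry and truly requires the genericity of $f$.
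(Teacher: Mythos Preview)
Your evenness argument via intersection multiplicities on the surface is correct, but it is considerably heavier than what the paper does, and more importantly you have overlooked the single observation that resolves \emph{both} parts of the lemma at once. On $C$ the relation $pq=-r^2$ gives
\[
\div_C(p)+\div_C(q)=2\,\div_C(r).
\]
Now use smoothness of $C$ directly: if $x\in C$ were a common zero of $p$ and $q$, then $r(x)=0$ as well, so $f=pq+r^2$ lies in $\mathfrak m_{X_P,x}^2$ and $C=\V(f)$ would be singular at $x$. Hence $\div_C(p)$ and $\div_C(q)$ have disjoint supports. From this alone, evenness is immediate (at any $x$ in the support of $\div_C(p)$, the equality above reads $\mathrm{ord}_x(p)=2\,\mathrm{ord}_x(r)$), and the halves satisfy
\[
\tfrac12\div_C(p)-\tfrac12\div_C(q)=\div_C(r)-\div_C(q)=\div_C\!\left(\frac{r}{q}\right),
\]
so $\tfrac12\div_C(p)\sim\tfrac12\div_C(q)$. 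These are two effective divisors in the same linear system with disjoint supports, which forces $|\tfrac12\div_C(p)|$ to be base-point-free.

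This is exactly the step you are missing. You already write down $\div_C(q)=2D_q$ and note that a hypothetical base point $x$ has $q(x)\neq 0$, i.e.\ $x\notin\operatorname{supp}D_q$; what you fail to observe is that $D_q$ lies in the linear system $|D|$ itself, which instantly contradicts $x$ being a base point. Your belief that ``this is precisely where the genericity of $f$ must enter'' is therefore a misconception: smoothness of $C$ is all that is used, and the elaborate plan involving Riemann--Roch, effective square roots, the hyperelliptic involution, and a dimension count on the parameter space is unnecessary. (That machinery does become relevant later in the paper, but for the converse direction and for the counting, not for this lemma.) As written, your proposal leaves the base-point-free claim unproved and rests on an incorrect expectation about what hypotheses are required.
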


\begin{proof}
  The identity $f=pq+r^2$ translates into an identity of
  divisors
\[
  \div_C(p)+\div_C(q) \ = \ 2\div_C(r).
\]
  on $C$. Since $C$ is smooth, the supports of the divisors $\div_C(p)$ and
  $\div_C(q)$ are disjoint, as any common zero of $p$ and $q$
  on $C$ would also be a zero of $r$ and therefore a zero of $f$ with multiplicity $\geq$ 2. Hence $\div_C(p)$ is
  even. Also, $\frac 12\div_C(p)$ and $\frac 12\div_C(q)$ are
  linearly equivalent, since 
\[
  \frac 12\div_C(p)-\frac 12\div_C(q) \ = \ \div_C(r/q).
\]
  This shows that the linear system $|\frac 12\div_C(p)|$ is base-point-free.
\end{proof}

The proof of Theorem~\ref{Thm:complexcount} relies on a converse of Lemma~\ref{lem:Divisor}.
  To build up to this, we give an identification between representations $pq + r^2 = \gamma \cdot f$
  and two-torsion points of the Jacobian of the curve $C$, \textit{i.e.} divisor classes $[E]$ with $2E\sim 0$. 
  When $\gamma$ is non-zero, this rescales to give a representation of the form $pq + r^2 = f$. 

 \begin{Lem}\label{lem:Divisor2Biform}
Suppose $f = p_0 q_0 + r_0^2$  and $[E]$ is a two-torsion point of the Jacobian of the curve $C$. 
Then there exist linear forms $p, q,r$ on $\P^{d+e+1}$ with $p\neq q$ such that 
$pq+r^2 \  = \  \gamma \cdot  f $ for some $\gamma \in \C$, the divisor $\div_C(p)$ is even, and
$\frac{1}{2}\div_C(p)$ is linearly equivalent to $E + \frac{1}{2}\div_C(p_0)$.
\end{Lem}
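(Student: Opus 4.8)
The plan is to build the triple $(p,q,r)$ divisor by divisor on the smooth curve $C$ and then lift the resulting relation from $C$ to the ambient scroll $X_P$. Write $H$ for the hyperplane class restricted to $C$, so $\deg H = \deg C = 2(d+e) = 2g+2$ by Lemma~\ref{lem:curve}. Applying Lemma~\ref{lem:Divisor} to the given representation $f = p_0q_0 + r_0^2$, the divisor $\div_C(p_0)$ is even; set $D_0 = \tfrac12\div_C(p_0)$, so $\deg D_0 = g+1$ and $2D_0$ is linearly equivalent (written $\sim$) to $H$. The class $E + D_0$ has degree $g+1 > g$, so Riemann--Roch gives $\dim|E+D_0|\ge 1$; in particular I may fix an effective divisor $D \sim E + D_0$. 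This $D$ is exactly the divisor the lemma demands, and it remains to realize it as half the divisor of a linear form and to complete it to a relation.

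First I would produce $p$. Since $[E]$ is two-torsion, $2E \sim 0$, whence $2D \sim 2E + 2D_0 \sim 2D_0 = \div_C(p_0) \sim H$, so $2D$ is an effective divisor in the complete hyperplane class. Because $C$ is projectively normal (Lemma~\ref{lem:curve}), hyperplane sections cut out the full linear system $|H|$, and there is a linear form $p\in\C[X_P]_1$ with $\div_C(p) = 2D$; then $\div_C(p)$ is even and $\tfrac12\div_C(p) = D \sim E + D_0$, as required. Next I would produce $q$ and $r$. As $\deg(H - D) = g+1 > g$, Riemann--Roch gives $\dim|H-D|\ge 1$, so I may pick a general $R' \in |H-D|$ with $R' \neq D$. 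Setting $R = D + R' \sim H$ yields a linear form $r$ with $\div_C(r) = R$, and since $2R' \sim 2H - 2D \sim H$, projective normality again produces a linear form $q$ with $\div_C(q) = 2R'$. By construction $\div_C(pq) = 2D + 2R' = 2R = \div_C(r^2)$, so the rational function $pq/r^2$ on $C$ has trivial divisor and is a nonzero constant $c$; after rescaling $r$ by $\sqrt{-c}$ I obtain $pq + r^2 \equiv 0$ on $C$.

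The final, and to my mind decisive, step is to lift this identity off the curve. The quadric $pq+r^2 \in \C[X_P]_2$ vanishes on $C = X_P \cap \V(f)$. Here I would use that $X_P$ is {\acm} and $f$ is a nonzerodivisor, so that $\C[X_P]/(f)$ is Cohen--Macaulay, hence saturated, and coincides with the homogeneous coordinate ring of $C$; consequently the degree-two part of the ideal of $C$ is $(f)_2 = \C\cdot f$. Therefore $pq + r^2 = \gamma f$ for a scalar $\gamma$, which is precisely the asserted relation, the divisor condition on $\tfrac12\div_C(p)$ having already been secured. Finally, $R' \neq D$ forces $\div_C(q) = 2R' \neq 2D = \div_C(p)$, and since $C$ is nondegenerate a linear form is determined by its divisor on $C$ up to scalar, so $p \neq q$; a general choice of $R'$ also keeps $pq + r^2$ from vanishing identically on $X_P$, giving $\gamma \neq 0$. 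I expect the genuine obstacle to be exactly the identification $I(C)_2 = \C\cdot f$: everything else is Riemann--Roch bookkeeping on $C$, but passing from ``vanishes on $C$'' to ``is a multiple of $f$'' is what converts the divisor-theoretic construction into an honest algebraic identity among forms on the scroll, and it is where the \acm{} and projective-normality hypotheses are essential.
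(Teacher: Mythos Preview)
Your argument is correct and follows essentially the same route as the paper. Your divisor $R'\in|H-D|$ lives in the same linear system as the paper's second effective divisor $D'\in|E+D_0|$, since $H-D\sim 2D_0-(E+D_0)=D_0-E\sim D_0+E$; so the construction of $(p,q,r)$ via projective normality and the lifting to $\C[X_P]_2$ match the paper exactly, with your treatment of the step $I(C)_2=\C\cdot f$ being somewhat more explicit.

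One overclaim to remove: your last sentence asserts that a general choice of $R'$ forces $\gamma\neq 0$. The lemma only claims $\gamma\in\C$, and the paper deliberately defers the question of $\gamma\neq 0$ to the next two lemmas, where it is shown to depend on the genericity of $f$ itself (cf.\ Example~\ref{Exm:generic}, in which for certain two-torsion points every choice yields $\gamma=0$). Varying $R'$ alone, with $D$ and hence $p$ fixed, does not suffice to avoid $\gamma=0$ for non-generic $f$, so this sentence should be dropped.
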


\begin{proof}
By Lemma~\ref{lem:Divisor}, the divisor
$\div_C(p_0)$ is even, and we define $D_0 = \frac12 \div_C(p_0)$. 
Note that the degree of $D_0$ is $\deg(C)/2 = d+e$. As above, the curve $C$ has genus $g = d+e-1$. 
Then the Riemann-Roch theorem shows that 
\[
l(E + D_0) \  \geq  \ \deg(D_0) + 1 - g  \ = \  (d+e)+1-(d+e-1)  \ = \ 2,
\]
where $l(E+D_0)$ is the dimension of the vector space underlying the
linear system $|E+D_0|$ (see~\cite[Section 8.6]{Fulton}).
Therefore, there are two distinct effective divisors $D,D'$ in $|E + D_0|$. The three divisors $2D$, $2D'$, and $D+D'$ are linearly equivalent to $2D_0 = \div_C(p_0)$. Since the curve $C\subset\P^{d+e+1}$ is projectively normal, there are linear forms $p$, $q$, and $r$ on $\P^{d+e+1}$ such that
\[
2D  \ = \  \div_C(p),   \ \ 2D'  \ = \  \div_C(q), \ \ D + D'  \ = \  \div_C(r).
\]
This implies that
\[
\div_C\left(\frac{pq}{r^2}\right)  \ = \  0, 
\]
so the rational function $pq/r^2$ is constant on $C$. After multiplying $r$ by a scalar, we can assume that
\[
pq \  = \  - r^2 \ \ \ \text{ in }  \ \ \ \C[C] = \C[X_P]/(f).
\]
This shows that $pq + r^2$
is a scalar multiple of $f$.  Since $D\neq D'$, $p$ and $q$ are distinct.
\end{proof}
  
The proof of Theorem~\ref{Thm:complexcount} requires a characterization of when $\gamma=0$ in Lemma~\ref{lem:Divisor2Biform}. 
In fact, the next series of lemmata show that for \emph{generic} $f$ and any two-torsion point $[E]$, the resulting constant $\gamma$ is non-zero. 

The curve $C$ is hyperelliptic. If $X_P$ is not the Segre surface, we can identify
  the double cover $\psi\colon C \to \P^1$ in this toric embedding by
  the unique ruling of the scroll. Two points $p_1$ and $p_2$ of the curve
  $C$ satisfy $\psi(p_1)=\psi(p_2)$ if and only if the line
  $\ol{p_1p_2}$ is contained in the ruling of $X_P$. All but finitely many lines of the ruling
  intersect the quadric $\V(f)$ in two distinct points. The
  ramification points of $\psi$ are exactly the intersection points
  with lines on $X_P$ that are tangent to the quadric $\V(f)$. 
  
\begin{Lem}\label{Lem:zeroSOS}
  Let $p,q,$ and $r \in \C[X_P]_1$ such that $p\neq q$ and
  $pq + r^2 = 0$ in $\C[X_P]$. The curves $\V(p)\cap X_P$ and
  $\V(q)\cap X_P$ have a common irreducible component. Furthermore,
  the curve $\V(p)\cap X_P$ contains a line of the ruling of $X_P$
  with multiplicity at least two.
\end{Lem}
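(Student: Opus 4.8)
The plan is to lift the hypothesis to an identity of \emph{effective divisors} on the smooth toric surface $X_P$ and then read off both assertions from intersection numbers with a line of the ruling. Viewing $p,q,r$ as global sections of the hyperplane bundle $\mathcal O_{X_P}(1)$, the relation $pq=-r^2$ is an equality of sections of $\mathcal O_{X_P}(2)$, and hence
\[
\div(p)+\div(q) \ = \ 2\,\div(r)
\]
as effective divisors on $X_P$. Let $F$ be the class of a ruling line and $H$ the hyperplane class. Since a ruling line is embedded as a line in $\P^{d+e+1}$, we have $H\cdot F=1$; moreover $F^2=0$ and $F$ is nef. These three facts are all I will use.

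First I would prove the existence of a common component. Suppose $\V(p)\cap X_P$ and $\V(q)\cap X_P$ share no irreducible component. Then at every prime divisor $\Gamma$ at most one of $\div(p),\div(q)$ has positive coefficient, so the displayed identity forces that coefficient to equal the (even) coefficient of $\Gamma$ in $2\div(r)$. Thus $\div(p)=2E$ for some effective $E$, and intersecting with a ruling line gives $1=H\cdot F=\div(p)\cdot F=2\,(E\cdot F)$, which is absurd. Hence $\V(p)\cap X_P$ and $\V(q)\cap X_P$ have a common component.

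For the sharper statement about a doubled ruling line I would peel off the common part. Let $G$ be the componentwise minimum of $\div(p)$ and $\div(q)$, and write $\div(p)=G+P'$, $\div(q)=G+Q'$, so that $P'$ and $Q'$ share no component. As before, $P'+Q'=2\bigl(\div(r)-G\bigr)$ has even coefficients, and since $P',Q'$ share no component each of them is even; write $P'=2P''$ with $P''$ effective. Intersecting $\div(p)=G+2P''$ with $F$ gives $1=G\cdot F+2\,(P''\cdot F)$, so $G\cdot F$ is odd; as $G$ is effective and $F$ is nef, $G\cdot F\ge 1$, whence $P''\cdot F=\tfrac12(1-G\cdot F)\le 0$. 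Effectivity of $P''$ and nefness of $F$ force $P''\cdot F=0$, so $G\cdot F=1$. Finally, an irreducible curve $\Gamma$ with $\Gamma\cdot F=0$ must be a fiber, since any curve not contained in a fiber maps finitely onto the base and meets $F$ positively; hence $P''$ is a nonnegative combination of ruling lines.

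It remains to see $P''\neq0$. If $P''=0$ then $\div(p)=G\le\div(q)$, and comparing classes $[\div(q)]=[\div(p)]=H$ gives $\div(q)=\div(p)$, i.e.\ $p$ and $q$ are proportional. This I exclude: the forms produced in Lemma~\ref{lem:Divisor2Biform} are non-proportional because $D\neq D'$, so I may assume $\div(p)\neq\div(q)$. Then $P''\neq0$, and $P'=2P''\le\div(p)$ contains a ruling line with multiplicity at least two, as claimed. The main obstacle—and the reason I work on the surface rather than factoring $p,q,r$ in the torus coordinates $(s,x)$—is that the common component and the doubled fiber can lie on the toric boundary and thus be invisible after restricting to the dense torus (already for $A_1=(s-\alpha)^2$, $A_2=-(s-\beta)^2$ the two divisors meet only along a boundary section); intersecting with the fiber class $F$ circumvents this bookkeeping entirely.
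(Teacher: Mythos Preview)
Your argument is correct and takes a genuinely different route from the paper's. The paper works explicitly with biforms in $\C[s,t,x,y]$: it pulls out the $\C[s,t]$-content of each of $p,q,r$, observes that the remaining cofactors (degree one in $x,y$, hence irreducible) must all coincide, and then factors the resulting identity $p_1q_1=-r_1^2$ in $\C[s,t]$ as $p_1=u^2w$, $q_1=v^2w$, $r_1=iuvw$; the common factor $wp_2$ is the shared component and any root of $u$ gives the doubled ruling line. You instead lift the relation to the divisor identity $\div(p)+\div(q)=2\div(r)$ on the smooth surface and extract both conclusions by intersecting with the fiber class $F$, using only $H\cdot F=1$, $F^2=0$, and nefness of $F$. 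Your approach is more intrinsic and would transfer verbatim to any smooth projective ruled surface embedded with $H\cdot F=1$; the paper's approach is more explicit, producing the common component and the doubled fiber as concrete polynomial factors. Both proofs in fact need $p$ and $q$ \emph{non-proportional}, not just unequal (in the paper's notation, if $u$ and $v$ are both constants then $p$ and $q$ are proportional but can still be distinct); you flag this and justify the stronger hypothesis via Lemma~\ref{lem:Divisor2Biform}, whereas the paper's sentence ``$p\neq q$ implies $u$ is not constant'' glosses over it. One small inaccuracy in your closing remark: the paper's factorization is carried out in the bihomogeneous ring $\C[s,t,x,y]$, not just in torus coordinates, so it does see boundary components---but this does not affect the validity of your proof.
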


\begin{proof}
  This is most easily expressed in terms of biforms.
  Suppose $p,q,r \in \C[x,y,s,t]$ are biforms with Newton polytope $P$ satisfying $p\neq q$ 
  and $pq + r^2 = 0$. Write $p = p_1 p_2$, $q = q_1q_2$,
  $r = r_1 r_2$, where $p_1, q_1, r_1\in \C[s,t]$ are bivariate forms
  in $s$ and $t$ and $p_2, q_2, r_2$ have no factors in
  $\C[s,t]$. Since $p_2, q_2, r_2$ have degree one in the variables
  $x,y$ and no factors of degree $0$ in $x,y$, they must be
  irreducible in $\C[s,t,x,y]$. It follows that each of $p_2$, $q_2$, and $r_2$ 
  are relatively prime to each of $p_1$, $q_1$, and $r_1$. 
  The factorization $( p_1 p_2) ( q_1 q_2) = - (r_1 r_2)^2$ then 
  implies that  $p_2 = q_2 = r_2$ and 
  $p_1q_1 = -r_1^2$.  Since $p_1, q_1, r_1 \in \C[s,t]$, we can factor
  $ p_1 = u^2 w$, $q_1 = v^2 w$, and $r_1 = i u v w$, for some
  $u, v,w \in \C[s,t]$.  The common factor $wp_2$ of all three biforms
  corresponds to a common irreducible component of the curves
  $\V(p)\cap X_P$, $\V(q)\cap X_P$, and $\V(r)\cap X_P$. 
  The assumption that $p\neq q$ implies that the bivariate form $u$ is not a constant and has some root $[s:t] \in \P^1$. 
  This root corresponds to a line in $\V(p)\cap X_P$. Since $u^2$ divides $p$, it has multiplicity at least two.
\end{proof}

We show that for \emph{generic} $f$, the linear form $p$ in Lemma~\ref{Lem:zeroSOS} cannot 
define an even divisor on the curve $C$ defined by $f$.  This non-generic condition is equivalent to 
another condition on the representations $f = p_0q_0 + r_0^2$. 

\begin{Lem}\label{Lem:ZeroIff}
Suppose that the quadratic form $f\in \C[X_P]_2$ defines a smooth curve $C\subset X_P$. Then the following statements are equivalent.
\begin{itemize}
\item[(a)] There exist linear forms $p,q,$ and $r \in \C[X_P]_1$ such that $p\neq q$, $pq+r^2=0$ in $\C[X_P]_2$,
and the divisor $\div_C(p)$ is even.
\item[(b)] For every representation $f = p_0q_0+r_0^2$, the linear system
  $|\frac12 \div_C(p_0)|$ contains a divisor of the form $R_1+R_2+G$,
  where $R_1$ and $R_2$ are ramification points of the double cover
  $\psi\colon C\to \P^1$ and $G$ is an effective divisor.
\end{itemize}
Moreover, a generic quadratic form $f$ in $\C[X_P]_2$ does not satisfy
these conditions.
\end{Lem}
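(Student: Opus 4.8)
The plan is to translate both conditions into divisor theory on the hyperelliptic curve $C$ through its double cover $\psi\colon C\to\P^1$, with Lemma~\ref{Lem:zeroSOS} serving as the dictionary between the identity $pq+r^2=0$ and divisors. Write $H=\psi^*(\mathrm{pt})$ for the hyperelliptic class and fix $D_0=\frac12\div_C(p_0)$; then $2R\sim H$ for every ramification point $R$ and $2D_0\sim H_C$, the hyperplane class of $C$, so the half-hyperplane classes are precisely the square roots of $H_C$ and any two differ by a two-torsion class.

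For (a)$\Rightarrow$(b) I start from $p\neq q$, $r$ with $pq+r^2=0$ and $\div_C(p)$ even. Lemma~\ref{Lem:zeroSOS} provides a factorization $p=u^2wp_2$ with $u\in\C[s,t]$ non-constant, so over each root $b_0$ of $u$ the hypersurface $\V(p)$ carries the ruling line doubly; restricting to $C$ gives $\div_C(p)\ge 2\psi^*(b_0)$, whence $D:=\frac12\div_C(p)$ contains the complete fiber $\psi^*(b_0)\sim H$. Since $\psi^*(b_0)\sim 2R$ for every ramification point $R$, the divisor $2R+\big(D-\psi^*(b_0)\big)$ is effective and linearly equivalent to $D$, so $|D|$ contains a divisor of the shape $R+R+G$. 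It remains to relate the square root $[D]$ obtained here to an arbitrary representation class $[D_0]$: from $2R_i\sim H$ one gets $l(D_0-R_1-R_2)=l\big(D_0+(R_1-R_2)-H\big)$ with $R_1-R_2$ two-torsion, so ``$|D_0|$ contains $R_1+R_2+G$'' is equivalent to ``the square root $D_0+(R_1-R_2)$ contains a fiber,'' and the fact that the two-torsion of a hyperelliptic Jacobian is generated by differences of ramification points is what propagates the fiber found in $[D]$ across all representations.

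The converse (b)$\Rightarrow$(a) reverses this construction: given $R_1+R_2+G\in|D_0|$, the ruling lines over the branch points $\psi(R_1),\psi(R_2)$ are tangent to $\V(f)$, and projective normality of $C$ (Lemma~\ref{lem:curve}) produces a linear form whose zero scheme on $X_P$ contains such a ruling line doubly; running the factorization of Lemma~\ref{Lem:zeroSOS} backwards then yields $p\neq q$, $r$ with $pq+r^2=0$ and $\div_C(p)$ even. For the genericity statement I reformulate (b) through the morphism $\phi_{D_0}\colon C\to\P^1$ defined by the base-point-free pencil $|D_0|$: condition (b), for a given representation, holds exactly when $\phi_{D_0}$ identifies two of the $2(d+e)$ ramification points of $\psi$, or ramifies at one of them. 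A dimension count---equivalently, exhibiting one $f$ whose ramification points are in general position with respect to $|D_0|$ and invoking semicontinuity---then shows that a generic $f$ satisfies neither (a) nor (b).

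I expect the main obstacle to be the transfer step within (a)$\Rightarrow$(b): a single degenerate form only exhibits a fiber in one square root of $H_C$, whereas (b) demands a two-ramification-point divisor in the linear system of \emph{every} representation. Making this uniform requires identifying exactly which two-torsion classes are realized by doubled ruling lines and matching them against the differences $R_i-R_j$; I anticipate that this is where the additional genericity hypotheses signaled around Example~\ref{Exm:generic} genuinely enter and where the bookkeeping is most delicate.
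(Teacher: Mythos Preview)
Your overall strategy---translate via Lemma~\ref{Lem:zeroSOS} into divisor theory on the hyperelliptic curve $C$ and exploit the two-torsion structure of its Jacobian---is the same as the paper's. However, two substantive steps are missing.

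For (b)$\Rightarrow$(a), your sketch stops short of the decisive point. Starting from $R_1+R_2+G\in|D_0|$, projective normality indeed yields linear forms $p,q,r$ with $\div_C(p)=4R_1+2G$, $\div_C(q)=4R_2+2G$, $\div_C(r)=2R_1+2R_2+2G$, and after rescaling one obtains $pq+r^2=\gamma f$ in $\C[X_P]$ for some constant $\gamma$. ``Running Lemma~\ref{Lem:zeroSOS} backwards'' does not explain why $\gamma=0$. The paper's argument is that if $\gamma\neq 0$, then every point in the support of $G$---being a common zero of $p,q,r$---is a singular point of $C=\V(f)\cap X_P$, contradicting smoothness. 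This use of smoothness is the idea you are missing. (The Segre case $d=e=1$, where $G=0$, is handled separately: the ideal of $X_P$ then contains no rank-three quadric, so (a) is vacuous.)

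For (a)$\Rightarrow$(b), you correctly flag the transfer step as the obstacle, but your proposed fix does not close it. Your equivalence between ``$|D_0|$ contains $R_1+R_2+G$'' and ``$|D_0+(R_2-R_1)|$ contains a fibre'' is correct, and you have shown that $|D|=|D_0+E|$ contains a fibre for the specific two-torsion class $E=[D-D_0]$. But knowing that the two-torsion is \emph{generated} by differences $R_i-R_j$ does not imply that this particular $E$ \emph{equals} a single such difference, which is what your reduction needs. The paper asserts precisely this, $E\sim R_2-R_1$, citing Dolgachev, and then computes $\tfrac12\div_C(p_0)\sim P_1+P_2+G+R_1-R_2\sim R_1+R_2+G$ directly using $P_1+P_2\sim 2R_2$. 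Your genericity argument via the pencil map $\phi_{D_0}$ is a plausible alternative to the paper's dimension count on $(p,q,r)\mapsto pq+r^2$ restricted to the codimension-two locus $\sH$ of linear forms containing two ruling lines, but as stated it remains a heuristic rather than a proof.
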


\begin{proof}
  We first show the equivalence of (a) and (b). Suppose that
  $pq+r^2=0$ in $\C[X_P]$ where $\div_C(p)$ is even and
  $f = p_0q_0 + r_0^2$. Note that the divisor
  $\frac12 \div_C(p) - \frac12 \div_C(p_0)$ is a two-torsion point. It
  is linearly equivalent to $[R_2-R_1]$, where $R_1$ and $R_2$ are
  ramification points of $\psi$, see \cite[Section
  5.2.2]{DolMR2964027}. By Lemma \ref{Lem:zeroSOS}, the curve
  $\V(p)\cap X_P$ contains a line of the ruling with multiplicity two,
  so $\frac12 \div_C(p) = P_1 + P_2 + G$ with $\psi(P_1) = \psi(P_2)$.
  The divisor $P_1+P_2$ on $C$ is linearly equivalent to $2R_2$
  because $\psi(P_j)$ is linearly equivalent to $\psi(R_2)$ on $\P^1$.
  Thus
\[
\frac12 \div_C(p_0) \ \sim  \ \frac12 \div_C(p) + R_1 - R_2 \ \sim  \ R_1 + R_2 +G.
\]
Conversely, suppose the linear system $|\frac12 \div_C(p_0)|$ contains a
divisor of the form $R_1+R_2+G$ for the representation $f = p_0q_0+r_0^2$.
Then the linear system $|\frac12 \div_C(p_0) + R_1-R_2|$ contains the
divisor $2R_1 + G$ and therefore also $2R_2 + G$. 
As in the proof of Lemma~\ref{lem:Divisor2Biform},
since $C$ is projectively normal and $[R_1-R_2]$ is a two-torsion point, there are
linear forms $p$, $q$, and $r$ such that $\div_C(p) = 4R_1 + 2G$,
$\div_C(q) = 4R_2 + 2G$, and $\div_C(r) = 2R_1 + 2R_2 + 2G$.
Therefore, the rational function $\frac{pq}{r^2}$ is constant on
$C$. After rescaling, we obtain an identity $pq+r^2 = \gamma f$
in $\C[X_P]$, for some $\gamma\in\C$. If $\gamma$ were non-zero, every
point in the support of $G$ would be a singular point of $C$,
a contradiction.

Finally, we prove the genericity statement. Note first that if $X_P$
is the Segre surface ($d=e=1$), its defining ideal does not contain
any quadratic form
of rank three, hence (1) cannot occur. So we assume that $X_P$
is not the Segre surface. Geometrically, statement (2)
means that $\V(p_0)\cap X_P$ contains two lines of the ruling. To see
this, note that the hyperplane section of $C$ defined by $p_0$ is even,
so $\V(p_0)$ is tangent to $C$ at $R_1$. Since $R_1$ is a ramification
point of the double cover $\psi\colon C\to \P^1$, the tangent to $C$
at $R_1$ is a line of the ruling of $X_P$, which is therefore
contained in $\V(p_0)$.

Lines in $X_P$ are skew and form a one-dimensional family, so the
  variety $\sH$ of linear forms $p\in \C[X_P]_1$ for which $\V(p)$ contains two
  lines in $X_P$ has codimension two in $\C[X_P]_1$.  Consider the map
\[
\phi' \colon \left\{
\begin{array}[h]{ccc}
\sH \times \C[X_P]_1 \times \C[X_P]_1 & \to & \C[X_P]_2 \\
(p,q,r) & \mapsto & pq + r^2.
\end{array}\right.
\]
Its differential at $(p,q,r)$ maps $h = (h_1,h_2,h_3)$ to the
quadratic form $ph_2 + qh_1 + 2rh_3$, where $h_1$ is taken from the
tangent space to $\sH$ at $p$. This tangent space has codimension two.
Hence the rank of ${\rm d}\phi'$ is at most $3 \dim(\C[X_P]_1) - 4$, because
$h = (0,r,-\frac12 p)$ and $h = (p,-q,0)$ lie in the kernel.  Note
that $p$ is in the tangent space to $\sH$ at $p$ because $\sH$ is a
cone.  The space $\C[X_P]_2$ has dimension $3\dim(\C[X_P]_1)-3$. This comes from 
\eqref{eq:dimcount} on page \pageref{eq:dimcount} with $m=2$, $n = d+e+1$ 
and the observation that  $\dim_\R(\R[X_P]_k) = \dim_\C(\R[X_P]_k\otimes \C)$ for any $k\in \Z_{\geq 0}$.
Since ${\rm d}\phi'$ maps generically onto the tangent space of the image of
$\phi'$, that image must be contained in a hypersurface.
\end{proof}

\begin{Rem}\label{rem:ruling}
The condition in Lemma~\ref{Lem:ZeroIff} that $|\frac12
\div_C(p)|$ contains a divisor of the form $R_1+R_2+G$ for a
representation $f = pq+r^2$ can be expressed in terms of biforms. 
If $\frac12 \div_C(p)$ actually equals $R_1+R_2+G$, then the biform corresponding to $p$ is divisible 
by the two linear forms in $\C[s,t]$ defining $\psi(R_1)$ and $\psi(R_2)$ in $\P^1$. 
\end{Rem}

By excluding the non-generic quadratic forms $f$ described in Lemma~\ref{Lem:ZeroIff}, we can count sums-of-squares representations and prove Theorem~\ref{Thm:complexcount}. 

\begin{proof}[Proof of Theorem~\ref{Thm:complexcount}]\label{proof:complexcount}
We establish a bijection between inequivalent representations of $f$ as a sum of three squares and two-torsion points in the 
Jacobian of the curve $C$, \textit{i.e.} divisor classes $[E]$ with $2E\sim 0$. 
The Jacobian is a $g$-dimensional complex torus, 
therefore the number of two-torsion points is $2^{2g}$ (see e.g.~\cite[Section 5.2.2]{DolMR2964027}).

As in the discussion above Lemma~\ref{lem:Divisor}, over $\C$, representations of $f$ as a sum of three squares 
correspond to representations $f = pq + r^2$. 
We fix a representation $f = p_0 q_0 + r_0^2$ of $f$, which exists by
Theorem~\ref{Thm:varmin}.  For every two-torsion point $[E]$, Lemma~\ref{lem:Divisor2Biform} gives 
a representation $\gamma \cdot f = pq+r^2$ where $\gamma \in \C$  for which 
$\div_C(p)$ is even, and $[\frac{1}{2}\div_C(p)-  \frac{1}{2}\div_C(p_0)]$ equals $[E]$. By Lemma~\ref{Lem:ZeroIff} and 
the genericity of $f$, $\gamma$ is nonzero and we can rescale $p,q,r$  so that $f = pq + r^2$. 
Thus for generic $f$, every two-torsion point
gives rise to a representation $f = pq+r^2$. 

Conversely, from a representation
$f = pq+r^2$ we obtain the two-torsion point $[E]$, where
\[
E  \ = \  \frac12 \div_C(p) - \frac12 \div_C(p_0).
\]
The two maps just constructed are inverses of each other, up to equivalence of representations and divisors, respectively. 
By Proposition~\ref{prop:equivalence} below, these notions of equivalence are compatible, giving a bijection between equivalence classes of representations as a sum of three squares and two-torsion points of the Jacobian of $C$. 
\end{proof}

We now discuss the equivalence of representations as sums of squares in relation to equivalence of divisors on $C$.

\begin{Prop}\label{prop:equivalence}
Two representations $f = pq+r^2$ and $f = p'q'+(r')^2$ are equivalent, meaning $\lspan_\C(p,q,r) = \lspan_\C(p',q',r')$ in the space of linear forms, if and only if 
\[
\frac12 \div_C(p) \ \sim  \ \frac12 \div_C(p').
\]
\end{Prop}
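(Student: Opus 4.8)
The plan is to set up a dictionary between a representation $f=pq+r^2$ and the base-point-free pencil $|\tfrac12\div_C(p)|$ that it cuts out on $C$, and then to show that both ``equal spans'' and ``linearly equivalent half-divisors'' translate into ``the same pencil''. Throughout I would use that $C$ is projectively normal (Lemma~\ref{lem:curve}), hence linearly normal, and nondegenerate, so that restriction is an isomorphism $\C[X_P]_1\xrightarrow{\sim}H^0(C,\mathcal{O}_C(1))$. Writing $D:=\tfrac12\div_C(p)$ we have $\div_C(p)=2D$, so $\mathcal{O}_C(1)\cong\mathcal{O}_C(2D)$ and I may read $p,q,r$ as sections of $\mathcal{O}_C(2D)$.

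First I would prove the forward implication. Assume $\lspan_\C(p,q,r)=\lspan_\C(p',q',r')=:U$. The forms $p,q,r$ are linearly independent in $\C[X_P]_1$, since a linear relation would present $f$ as a quadratic form supported on a space of dimension at most two, hence factor it into linear forms, forcing the smooth irreducible curve $C=X_P\cap\V(f)$ to be reducible or non-reduced. Thus $U$ defines a morphism $\nu\colon C\to\P(U^\vee)\cong\P^2$ whose image lies on neither defining conic as a proper subset: since $pq+r^2$ and $p'q'+(r')^2$ both vanish on $C$, the irreducible nondegenerate curve $\nu(C)$ lies on both smooth conics they define, and as a conic is contained in a unique conic these coincide, so $\nu(C)=\mathcal{Q}$ is one smooth conic $\cong\P^1$. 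In the coordinates $[p:q:r]$ the conic relation exhibits $\{p=0\}$ as tangent to $\mathcal{Q}$ (it meets $\mathcal{Q}$ only where $r^2=0$), and likewise $\{p'=0\}$ is tangent to $\mathcal{Q}$; writing $\mu\colon C\to\mathcal{Q}\cong\P^1$ we therefore get the exact equalities $\div_C(p)=2\mu^{*}(y)$ and $\div_C(p')=2\mu^{*}(y')$. Since $\mu^{*}(y)\sim\mu^{*}(y')$, this yields $\tfrac12\div_C(p)\sim\tfrac12\div_C(p')$.

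For the converse I would set $D:=\tfrac12\div_C(p)\sim\tfrac12\div_C(p')$ and work under the assumption $l(D):=\dim H^0(C,\mathcal{O}_C(D))=2$, so that $|D|$ is an honest pencil depending only on $[D]$. By Lemma~\ref{lem:Divisor} each of $\tfrac12\div_C(p),\tfrac12\div_C(q),\tfrac12\div_C(r)$ is an effective divisor in $|D|$, hence the zero divisor of some $\sigma\in H^0(C,\mathcal{O}_C(D))$; then $p$ and $\sigma^2$ are sections of $\mathcal{O}_C(2D)\cong\mathcal{O}_C(1)$ with the same divisor on $C$, so they are proportional. Thus $p,q,r$ lie in the image of the multiplication map $\Sym^2 H^0(C,\mathcal{O}_C(D))\to H^0(C,\mathcal{O}_C(2D))\cong\C[X_P]_1$, and being independent they span this three-dimensional image, which depends only on $[D]$. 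Running the same argument for $p',q',r'$ gives $\lspan_\C(p,q,r)=\lspan_\C(p',q',r')$.

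The hard part is justifying $l(D)=2$, and this is the main obstacle. Base-point-freeness is Lemma~\ref{lem:Divisor}, and $l(D)\geq2$ is automatic from Riemann--Roch since $\deg D=\tfrac12\deg C=g+1$; what must be shown is $l(K-D)=0$, where $\deg(K-D)=g-3$. For $g\leq3$ this holds by degree. The nondegeneracy of $C$ rules out the map of $|D|$ factoring through the hyperelliptic double cover $\psi$ (that would force $D\sim kF$, whence $\mathcal{O}_C(1)\cong\mathcal{O}_C((g+1)F)$ and a degenerate embedding), but for $g\geq4$ this alone does not exclude $l(D)\geq3$. I therefore expect to derive $l(D)=2$ from the genericity of $f$: a class of degree $g+1$ is generically non-special, and one must exclude $K-D$ being effective for each of the finitely many classes $D$ with $2D\sim\mathcal{O}_C(1)$, a closed condition that I anticipate being avoided for generic $f$ in the same spirit as the genericity argument of Lemma~\ref{Lem:ZeroIff}. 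Granting this non-specialness, the two assignments between representations and half-divisor classes are mutually inverse, and the proposition follows.
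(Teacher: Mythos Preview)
Your forward direction is correct and close in spirit to the paper's: both recognize that the span $U$ determines a map $C\to\P(U^\vee)$ whose image is a fixed smooth conic, and that $p$ and $p'$ cut out tangent lines to this conic, forcing $\tfrac12\div_C(p)\sim\tfrac12\div_C(p')$. The paper phrases this via the $2\times2$ determinantal representations $Q,Q'$ and the intersection of the parametrized conics $\{\lambda^TQ\lambda:\lambda\in\P^1\}$ and $\{\mu^TQ'\mu:\mu\in\P^1\}$ in the common plane, but the content is the same.

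For the converse the approaches diverge. The paper does not try to compute $l(D)$. Instead, given $\tfrac12\div_C(p)\sim\tfrac12\div_C(p')$, it uses projective normality to produce, for every $(\lambda,\mu)\in\P^1\times\P^1$, a linear form $s_{\lambda\mu}$ with $f=(\lambda^TQ\lambda)(\mu^TQ'\mu)+s_{\lambda\mu}^2$, obtaining a morphism from $\P^1\times\P^1$ to the Grassmannian of planes in $\C[X_P]_1$. Since a generic $f$ has only finitely many rank-three Gram matrices (Corollary~\ref{cor:finitereps}), this map must be constant, and its image is the common span. So the paper also invokes genericity, but through finiteness rather than through $l(D)=2$.

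Your converse via $\Sym^2H^0(\mathcal{O}_C(D))$ is elegant once $l(D)=2$ is known. One small slip: $\tfrac12\div_C(r)$ is not in general a divisor; rather $\div_C(r)=D+D'$ with $D,D'\in|D|$, so $r$ is proportional to a product $\sigma\sigma'$ of two sections and still lies in the image of $\Sym^2$. The substantive gap is $l(D)=2$. Your reduction is right: on a hyperelliptic curve a base-point-free special divisor is a multiple of the $g^1_2$, so $l(D)\ge3$ forces $g$ odd and $D\sim\tfrac{g+1}{2}F$, hence $H\sim(g+1)F$. But the nondegeneracy claim fails: $|(g+1)F|$ has degree $2g+2$ and projective dimension $g+2$, hence is very ample and embeds $C$ \emph{nondegenerately} in $\P^{g+2}$ (on an affine model $y^2=h(x)$ the map is $[1:x:\cdots:x^{g+1}:y]$, and the extra section $y$ prevents the map from factoring through $\psi$). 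Thus nondegeneracy of $C$ alone does not exclude $H\sim(g+1)F$; a different argument is needed, for instance that the restriction from ${\rm Pic}(X_P)$ to ${\rm Pic}(C)$ is injective, or the genericity you defer to.
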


\begin{proof}
The $2\times 2$ matrices
$Q = \begin{pmatrix} p & ir \\ ir & q \end{pmatrix}$ and $Q' = \begin{pmatrix} p' & ir '\\ ir' & q' \end{pmatrix}$
are determinantal representations of $f$, \textit{i.e.} $f = \det(Q) = \det(Q') $. 
The matrix $Q$ defines a surjective morphism $\phi_Q:\P^1\times\P^1 \rightarrow \P(\lspan_\C(p,q,r))$ 
given by $(\lambda, \mu) \mapsto \lambda^T Q \mu$, and the image of the diagonal 
$\Delta \subset \P^1\times \P^1$ is a conic in this plane, of which every point 
$\lambda^T Q\lambda$ defines an even divisor on $C$. To see this, note that for any $\mu \neq \lambda \in \P^1$, the
determinant of the matrix $(\lambda \ \mu)^T Q (\lambda \ \mu)$ is a scalar multiple of $\det(Q) = f$. 
Thus 
\[\div_C(\lambda^T Q \lambda) + \div_C(\mu^T Q \mu)  \ = \  2\cdot \div_C(\lambda^T Q \mu). \]
Furthermore, the divisors $\frac{1}{2}\div_C(\lambda^T Q \lambda)$ and
$\frac{1}{2}\div_C(\mu^T Q \mu)$ are equivalent. For example, taking
$\lambda =[1:0]$ and $\mu =[0:1]$, we see that
\[
\frac12 \div_C(p) \  \sim  \  \frac12 \div_C(p) + \div_C(r/p)  \ = \  \div_C(r) - \frac12 \div_C(p)  \ = \  \frac12 \div_C(q).
\]

Now suppose that $\lspan_\C(p,q,r) = \lspan_\C(p',q',r')$.
The maps $\phi_Q$ and $\phi_{Q'}$ then map $\P^1\times \P^1$ to the same plane. The images $\phi_Q(\Delta) , \phi_{Q'}(\Delta)$ 
of the diagonal are conics in this plane and therefore must intersect. So for some $\lambda, \mu \in \P^1$, 
$\lambda^T Q\lambda =\mu^T Q'\mu$. Then
\[ \frac12 \div_C(p) \ \sim \ \frac12 \div_C(\lambda^T Q\lambda) \ = \ \frac12 \div_C(\mu^T Q'\mu) \ \sim \ \frac12 \div_C(p').\] 

Conversely, suppose $f=pq+r^2=p'q'+(r')^2$ where
$\frac12 \div_C(p) \sim \frac12 \div_C(p')$. Then there exists a
rational function $h\in\C(C)^\ast$ such that
$\frac 12\div_C(p)=\frac 12\div_C(p')+\div(h)$, where $\div(h)$ now denotes the principal divisor defined by the zeros and poles of $h$. It follows that
$\frac 12\div_C(p)+\frac 12\div(p')=\div_C(p')+\div(h)$. Since
$C\subset\P^{d+e+1}$ is projectively normal, this implies the
existence of a linear form $s$ such that
$\div_C(p)+\div_C(p')=\div_C(s^2)$. After rescaling, 
this gives a representation $f=pp'+s^2$. 

For $\lambda, \mu\in \P^1$, we have 
$\frac 12\div_C(\lambda^T Q \lambda ) \sim \frac12 \div_C(p) \sim \frac12 \div_C(p')\sim \frac 12\div_C(\mu^T Q' \mu )$, 
so by the same argument there is a representation $f=(\lambda^T Q \lambda )(\mu^T Q' \mu ) + (s_{\lambda \mu})^2$.
This defines a map from $\P^1\times \P^1$ into the Grassmannian of planes in $\P^{d+e+1}$ 
that maps $(\lambda, \mu)$ to $\lspan_\C\{\lambda^T Q \lambda, \mu^T Q' \mu, s_{\lambda \mu}\}$.
The image is either infinite or a single point. 
Each point in the image corresponds to the column span of a rank-three Gram matrix of $f$. By Corollary~\ref{cor:finitereps}, 
there are only finitely many such Gram matrices of $f$.

It follows that $L = \lspan_\C\{\lambda^T Q \lambda, \mu^T Q' \mu, s_{\lambda \mu}\}$ does not depend on $\lambda, \mu$. 
Since $\lspan_\C\{\lambda^T Q \lambda : \lambda \in \P^1\}$ equals $\lspan_\C\{p,q,r\}$ and is contained in $L$, 
we must have that $L = \lspan_\C\{p,q,r\}$. By similar arguments, $L$ contains $\lspan_\C\{\mu^T Q' \mu : \mu \in \P^1\}$
and must equal $\lspan_\C\{p',q',r'\}$.
\end{proof}

\begin{Rem}
The group of matrices $U$ that leave the quadratic form
$(p,q,r)\mapsto pq+r^2$ invariant is conjugate to the group of orthogonal $3\times 3$ matrices that define the equivalence relation on sums of three squares $\ell_1^2 + \ell_2^2 + \ell_3^2$.
Namely, each $U$ can by written as $AO A^{-1}$, where $O O^t = I_3$ and 
\[
A  \ = \  
\begin{pmatrix}
1 & i & 0 \\
1 & -i & 0 \\
0 & 0 & 1 \\
\end{pmatrix}. 
\]
Two representations $f = pq+r^2$ and $f = p'q'+(r')^2$
are equivalent, as in Proposition~\ref{prop:equivalence}, if and only if 
they are related by a linear relation $(p,q,r)^T = U (p',q',r')^T$.
\end{Rem}

\begin{Exm}\label{Exm:generic}
The genericity assumptions in Theorem~\ref{Thm:complexcount} are necessary. Indeed, 
consider the following biform of degree $(2,4)$, which defines a smooth curve in $\P^1\times \P^1$:
\[ f \ = \  (x-y) (s^2 - t^2)\cdot (x+y)(s^2 - 9 t^2) + x^2 (s^2 - 4 t^2)^2.\]
This example violates the condition of genericity of Lemma~\ref{Lem:ZeroIff}. For example, 
we have an expression $p_0q_0 + r_0^2$, where  $p_0 = (x-y)(s^2-t^2)$ is divisible by two linear forms
$(s+t)$ and $(s-t)$.  These define ramification points $R_1, R_2$ on the curve $C$. 
The curves $\V(f)$ and $\V(p_0)$ are shown in the affine chart $\{t=1, y=1\}$ in Figure~\ref{fig:nonGen}.

Adding the two-torsion point $[R_1-R_2]$ to $[\frac{1}{2}\div_C(p_0)]$ gives a divisor class $[\frac{1}{2}\div_C(p)]$, 
defined by the biform $p = (x-y)(s+t)^2$.    
This fails to produce a representation $\gamma f = pq - (r)^2$ with $\gamma\neq 0$.  
Instead, by taking $q = (x-y)(s-t)^2$ and $r = (x-y)(s+t)(s-t)$, we find such a representation with $\gamma = 0$. 
In fact, $f$ has only $60$ representations as a sum of three squares, which is four 
fewer than a generic biform of degree (2,4). 
\end{Exm}

\begin{center}
 \begin{figure}[h]
\begin{center}
 \includegraphics[height = 2in]{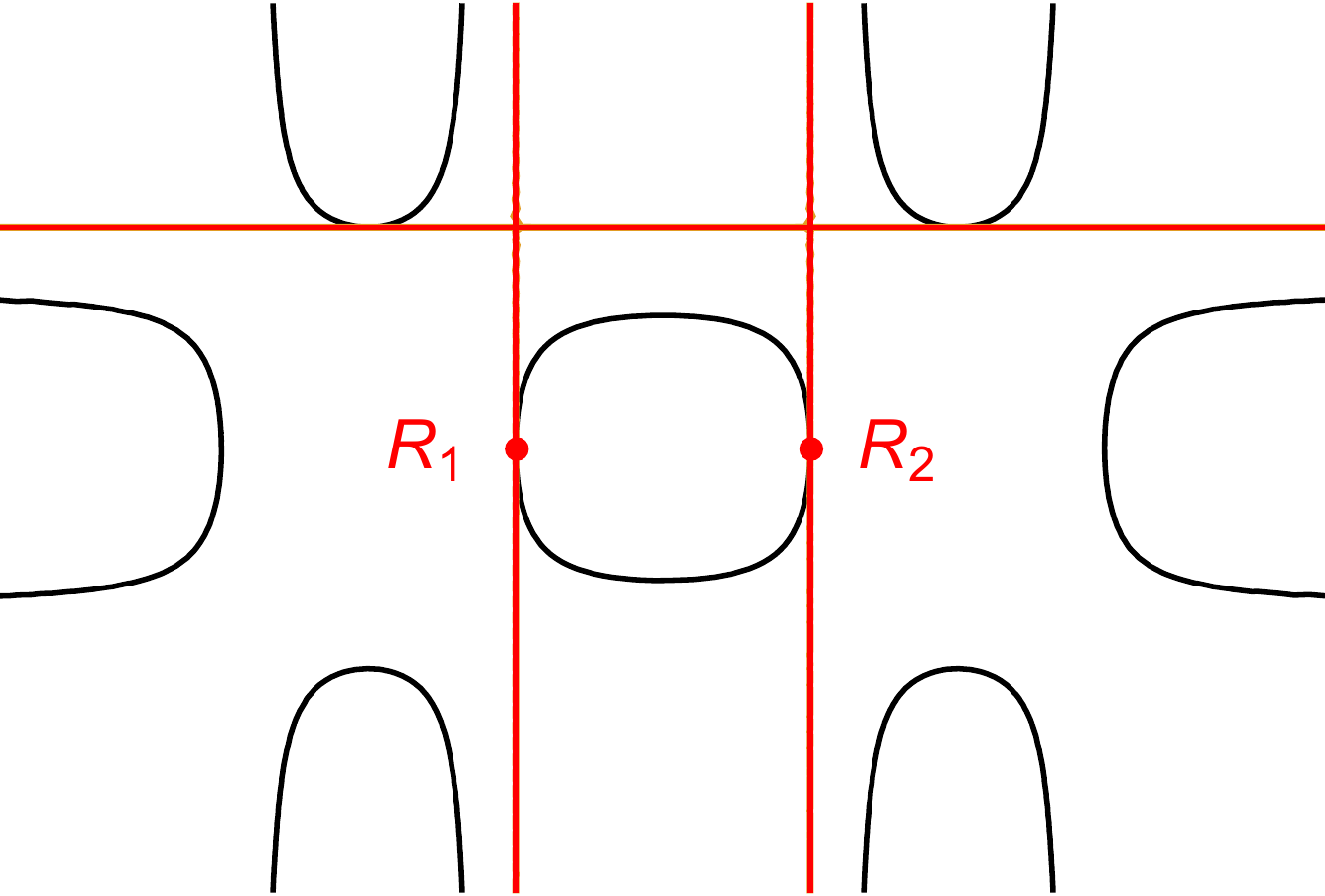}
 \end{center}
\caption{The non-generic curves $\V(f)$ and $\V(p_0)$ from Example~\ref{Exm:generic}.  }
\label{fig:nonGen}
\end{figure}
\end{center}

We now would like to identify the representations of $f$ as a sum and difference of squares of real polynomials. These will correspond to
the real two-torsion points on the Jacobian, which are divisor classes
$[D]$ such that $D\sim\ol{D}$ and $2D\sim 0$. 

For the discussion of real sums and differences of squares, we find it useful
to use the language of Gram matrices. 
As discussed in Definition~\ref{def:Equiv}, rank-three Gram matrices of a polynomial $f$ correspond to 
equivalence classes of representations of $f$ as a sum of three
squares.  Equivalence classes of representations as a sum of squares over $\R$ 
correspond to positive semidefinite Gram matrices, whereas equivalence classes of representations
as a sum and difference of squares over $\R$ correspond to indefinite Gram matrices.

\begin{Thm}\label{Thm:realsosrep}
  Let $f$ be a generic positive polynomial with Newton polytope
  $2P_{d,e}$ and let $g=d+e-1$. 
  The $f$ has $2^g$ positive semidefinite Gram matrices of rank three, which 
  correspond to $2^g$ inequivalent representations of $f$ as a sum of three real squares. 
  If $g$ is even, then there are no other real Gram matrices of rank three. 
  If $g$ is odd, $f$ has an additional $2^g$ real indefinite Gram
  matrices of rank three, which 
  correspond to representations of $f$ as a sum and difference of three real squares. 
\end{Thm}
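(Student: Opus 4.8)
The plan is to reduce the count of real Gram matrices to a count of conjugation-invariant two-torsion points of the Jacobian of $C$, and then to separate the positive semidefinite classes from the indefinite ones by a real-structure argument. First I would fix, using Theorem~\ref{Thm:varmin}, a representation of $f$ as a sum of three real squares; writing it as $f = p_0 q_0 + r_0^2$ with $q_0 = \ol{p_0}$ and $r_0$ real, this gives a base point and a divisor $D_0 = \frac12 \div_C(p_0)$ whose class is real, since $\frac12\div_C(p_0) \sim \frac12\div_C(q_0) = \ol{D_0}$. Complex conjugation $\sigma$ acts on the finite set of rank-three complex Gram matrices of $f$ (because $f$ is real), and the real Gram matrices are exactly the $\sigma$-fixed ones. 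The bijection of Theorem~\ref{Thm:complexcount} between these Gram matrices and two-torsion points $[E] = [\frac12\div_C(p) - D_0]$ is conjugation-equivariant: because $D_0$ is a real class, conjugating a representation $f = pq + r^2$ replaces $[E]$ by $[\ol{E}]$. Hence the number of real rank-three Gram matrices equals the number of real two-torsion points, i.e.\ classes $[E]$ with $2E \sim 0$ and $E \sim \ol E$.

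Next I would count these. The curve $C$ is hyperelliptic via the ruling $\psi\colon C \to \P^1$, with $2g+2 = 2(d+e)$ branch points, and $J[2]$ is modeled by even subsets of the branch points modulo complementation. The key input from positivity is that all branch points are non-real: for a real point $[s:t]$ the binary form $x^2 a + 2xyb + y^2 c$ is a restriction of $f$, hence positive definite for generic positive $f$, so its discriminant $b^2 - ac$ is strictly negative on $\P^1(\R)$; thus the branch locus consists of $g+1$ conjugate pairs $\{P_i, \ol{P_i}\}$ with no real points. Writing a subset by its pattern on each pair, $\sigma$ fixes ``both'' and ``neither'' and swaps ``$P_i$ only'' with ``$\ol{P_i}$ only''. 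A short computation then shows a $\sigma$-invariant class in $J[2]$ is represented either by a subset that is ``both/neither'' on every pair (Case~A), giving $2^g$ classes in every parity, or by a subset choosing exactly one point per pair (Case~B), which is an even subset only when $g$ is odd and then contributes another $2^g$ classes. This already yields the total count $2^g$ for $g$ even and $2^{g+1}$ for $g$ odd.

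It remains to match Case~A with positive semidefinite Gram matrices and Case~B with indefinite ones, and this is the main obstacle. I would use the conic from the proof of Proposition~\ref{prop:equivalence}: for $f = pq+r^2$ the image of the diagonal under $\phi_Q$ is a smooth conic in $\P(\lspan_\C(p,q,r))$, real when the Gram matrix is real, and its Veronese parametrization identifies it with the pencil $|D_0 + E|$. A real Gram matrix is positive semidefinite exactly when this conic has no real points, i.e.\ when the real pencil $|D_0+E|\cong\P^1$ carries no real divisor (equivalently, the anti-holomorphic involution on it is fixed-point-free); the base representation, for which the involution is $[\lambda_0:\lambda_1]\mapsto[\ol{\lambda_1}:-\ol{\lambda_0}]$, confirms that the positive semidefinite case has no real divisor. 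Thus the dichotomy reduces to the sign of the real structure $\sigma^2 = \pm 1$ on $H^0(\mathcal{O}(D_0+E))$. The heart of the argument is that $\mathcal{O}(D_0)$ carries a quaternionic ($\sigma^2 = -1$) real structure, while the two-torsion bundle $\mathcal{O}(E)$ is represented by an honest $\sigma$-invariant divisor (so $\sigma^2 = +1$) precisely in Case~A and by a class with no real divisor ($\sigma^2 = -1$) in Case~B; multiplicativity of the sign then gives $\sigma^2 = -1$ (no real divisor, positive semidefinite) for Case~A and $\sigma^2 = +1$ (real divisor, indefinite) for Case~B.

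Putting these together, the $2^g$ Case-A classes are exactly the positive semidefinite Gram matrices and account for the sums of three real squares in both parities, while the Case-B classes are indefinite and occur only for $g$ odd, contributing the additional $2^g$ representations as sums and differences of three squares. Throughout I would invoke genericity of $f$ (smoothness and projective normality of $C$ from Lemma~\ref{lem:curve}, distinct non-real branch points from strict positivity, and the non-degeneracy of Lemma~\ref{Lem:ZeroIff}) to guarantee that the complex count is exact and that every two-torsion point yields a genuine representation. The most delicate step, on which I would spend the most care, is the determination of the sign $\sigma^2$ on the bundles $\mathcal{O}(D_0+E)$, since this is what pins down positive semidefiniteness and makes the parity of $g$ enter the final answer.
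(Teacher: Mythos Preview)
Your argument is correct and conceptually aligned with the paper's, but it is carried out more explicitly. Both proofs fix a real sum-of-squares representation $f=p_0\ol{p_0}+r_0^2$, identify real rank-three Gram matrices with real two-torsion points $[E]\in J(\R)$ via the bijection of Theorem~\ref{Thm:complexcount}, and separate the positive semidefinite classes from the indefinite ones by a sign attached to the real line bundle $\mathcal{O}(D_0+E)$. The paper packages this sign as the homomorphism $[D]\mapsto\mathrm{sgn}(c_D)$ with $c_D=h_D\ol{h_D}$, computes $\mathrm{sgn}(c_{D_0})=-1$ directly from the base representation, and then \emph{cites} Scheiderer~\cite[Prop.~6.5 and Lemma~6.8]{ScheidererMR2580677} for the count of $[E]\in J_2(\R)$ in the kernel. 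You instead exploit the hyperelliptic description of $J[2]$ by even subsets of the $g+1$ conjugate pairs of branch points, split $J_2(\R)$ into the $2^g$ ``both/neither'' classes (Case~A) and, when $g$ is odd, the $2^g$ ``one-per-pair'' classes (Case~B), and identify the sign with the real structure $\sigma^2=\pm 1$ on $H^0(\mathcal{O}(D_0+E))$ using the conic of Proposition~\ref{prop:equivalence}.

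What each approach buys: the paper's proof is short because the hard step---determining how many real two-torsion classes lie in the kernel of the sign map---is outsourced to Scheiderer. Your route is self-contained and makes the parity of $g$ visible combinatorially, but the assertion you flag as most delicate is genuinely so: showing that Case~A is \emph{exactly} the set of two-torsion classes containing a $\sigma$-invariant divisor (equivalently, that no Case~B class contains one) is precisely the content of the cited result in~\cite{ScheidererMR2580677}. The inclusion Case~A $\subseteq$ kernel is easy (a union of conjugate pairs of Weierstrass points minus copies of the real $g^1_2$ is a real divisor), but the reverse inclusion uses that $C(\R)=\emptyset$ in an essential way and is not a formality. If you carry out your plan, this is the step that needs a complete argument; everything else in your outline is correct and matches the paper.
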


\begin{proof}
  By Theorem~\ref{Thm:varmin}, there exists a representation of $f$ as
  a sum of three squares, say $f = p_0\ol{p}_0 + r_0^2$, where $p_0$
  and $r_0$ are polynomials whose Newton polytope is contained in
  $P_{d,e}$, $p_0$ has complex coefficients and $r_0$ has real
  coefficients. As in the proof of Theorem~\ref{Thm:complexcount},
  this fixed representation gives a bijection between all representations
  over the complex numbers and two-torsion points of the Jacobian of
  $C$, the smooth model of $\V(f)$ constructed in Lemma~\ref{lem:curve}.

Let $D_0=\frac 12\div(p_0)$ be the divisor associated with the given
representation. If $D_1=\frac 12\div(p_1)$ comes from any other
representation of $f$ as a sum of three squares, then, since
$D_0\sim\ol{D_0}$ and $D_1\sim \ol{D_1}$, the two-torsion point 
$[D_0-D_1]$ is an element of $J(\R)$, the real points of the Jacobian
$J$ of $C$. 

For the converse direction, we introduce the
following notation: Given any divisor $D$ on $C$ with $D\sim\ol D$,
pick a rational function $h_D\in\C(C)^{\ast}$ with
$\div(h_D)=\ol{D}-D$. Then $\div(h_D\ol{h_D})=0$, so that
$h_D\ol{h_D}$ is a non-zero real constant, which we denote by
$c_D$. The constant $c_D$ depends on the choice of $h_D$, but a simple
computation shows that its sign only depends on the linear equivalence
class of $D$. Indeed, if $D'\sim D$, say $D'=D+\div(g)$, $\div(h_D)=\ol{D}-D$
and $\div(h_{D'})=\ol{D'}-D'$, then
$\div(h_{D'})=\ol{D}-D+\div(\ol{g}/g)=\div(h_D\cdot \ol{g}/g)$. Thus
there is $\gamma\in\C^*$ with $h_{D'}=\gamma h_{D}\ol{g}/g$, so that
$h_{D'}\ol{h_{D'}}=|\gamma|^2 h_{D}\ol{h_D}$.
This defines a group homomorphism
\[
({\rm Cl}\ C)(\R)\to\{\pm 1\},\ \ \ [D]\mapsto {\rm sgn}(c_D)
\]
from the group of conjugation-invariant divisor classes on $C$ into
the multiplicative group $\{\pm 1\}$.

Now let $[E]\in J(\R)$ be a real two-torsion point represented by a divisor
$E\sim\ol{E}$. As in the proof of Theorem~\ref{Thm:varmin} and Lemma~\ref{lem:Divisor2Biform}, we find
$l(D_0+E)\ge 2$, and we pick an effective divisor $D\in
|D_0+E|$ with $D\neq \ol{D}$; we further pick a complex linear
form $p$ and a real linear form $r$ satisfying
\[
\div_C(p)=2D,\ \ \ \div_C(r)=D+\ol{D}.
\]
Choosing $h_D$ as above, we find $\div(h_D)=\div(r)-\div(p)$, hence $
r/p$ equals $\alpha h_D$
for some $\alpha\in\C^\ast$. It follows that
\[
\frac{r^2}{p\ol{p}} \ = \ \frac{r}{p}\frac{\ol{r}}{\ol{p}} \ = \ (\alpha h_D)(\overline{\alpha h_D}) \ = \ |\alpha|^2c_D.
\]
We conclude that after rescaling 
\[
f \ = \ r^2-c_D|\alpha|^2p\ol{p}.
\]
This shows that signed representations of $f$ as a sum of three
squares are in bijection with real two-torsion points.
It remains to determine for which choices of $[E]\in J_2(\R)$ the
constant $c_D$ has negative sign. We have $E\sim D-D_0$, hence
\[
{\rm sgn}(c_D) \ = \ {\rm sgn}(c_{D_0})\cdot{\rm sgn}(c_{E}).
\]
Since $D_0$ comes from a real sum-of-squares representation, we know
that ${\rm sgn}(c_{D_0})=-1$. Hence we get another real sum-of-squares
representation from $D$ if and only if $[E]$ lies in the kernel of the sign map, \emph{i.e.} ${\rm sgn}(c_E)=1$. The count
we claim now follows from \cite[Prop.~6.5 and Lemma 6.8]{ScheidererMR2580677}. In fact, it is shown there more precisely that the kernel
of the above sign map, restricted to $J(\R)$, is given exactly by those
conjugation-invariant divisor classes that contain a
conjugation-invariant divisor.
\end{proof}

Here are two examples illustrating the counts in Theorems~\ref{Thm:complexcount}~and~\ref{Thm:realsosrep}.
\begin{Exm} ($e=d=1, g=1$) 
The polytope $P = P_{1,1}$ defines the toric variety $X_P= \P^1\times \P^1$, with the usual embedding into $\P^3$. 
Consider the curve $\V(f)\subset \P^1\times \P^1$ defined by $f = x^2(t^2 + s^2) +y^2 (2t^2+ 2 s t + 2 s^2)$. 
This is birational to the smooth projective curve $C$ obtained as the intersection of $X_P$ with a quadric in $\P^3$. 
The space of Gram matrices of $f$ is one-dimensional, as for any
$\alpha\in \C$, we have 
\[ 
f(s,t,x,y) \ \ = \ \
\begin{pmatrix}
yt \\ ys \\ xt\\ xs
\end{pmatrix}^T \begin{pmatrix}
2&1&0&\alpha\\
1&2&-\alpha&0\\
0&-\alpha&1&0\\
\alpha&0&0&1\\
\end{pmatrix}
\begin{pmatrix}
yt \\ ys \\ xt\\ xs
\end{pmatrix}.
\]
This Gram matrix has rank three for $\alpha \in \{\pm 1, \pm \sqrt{3}\}$, giving $4 = 2^{2g}$
inequivalent representations of $f$ as a sum of three squares over $\C$. Since $g$ is odd, 
$4 = 2^{g+1}$ rank-three Gram matrices are real, $2 = 2^g$ are positive semidefinite (for $\alpha=\pm1$), and
$2 = 2^g$ are indefinite (for $\alpha = \pm\sqrt{3})$. For example,
$\alpha= 1$ and $\alpha=\sqrt{3}$ give
\[f \ = \ (x t - s y)^2 + (x s + y t)^2 + (y t + ys )^2 \ = \  (x t - \sqrt{3}s y)^2 + (x s + \sqrt{3} y t)^2 - (y t - ys )^2. \]
Both $p_0 = (x t - s y) + i(x s + y t)$ and $p_1= (x t - \sqrt{3}s y)+i(x s + \sqrt{3} y t)$ define even divisors on the curve $C$, 
say $2D_0$ and $2D_1$. Then $[E] = [D_1 - D_0]$ is a real two-torsion point of the Jacobian, as $E \sim \ol{E}$ and $2E \sim 0$. 
Since $(y t + ys )^2/p_0 \ol{p_0} $ and $ (y t - ys )^2/p_1 \ol{p_1}$ 
are both constant on $\V(f)$ with opposite signs, $[D_0]$ and $[D_1]$ have different images under the group homomorphism $({\rm Cl} \ C)(\R) \rightarrow \{\pm1\}$ used in the proof of Theorem~\ref{Thm:realsosrep}.
\end{Exm}

\begin{Exm} ($e=1, d=g=2$)
Consider $f= t^2 (t^2 + s^2) x^2+(t^4 + t^2 s^2 + s^4)y^2 $.
This polynomial defines a singular curve in $\P^1\times \P^1$ which 
has a smooth model $C$ in the toric variety $X_{P} \into \P^4$ defined by the polytope $P = P_{2,1}$.
For all $(\alpha, \beta, \gamma)\in \C^3$, 
\[
f(s,1, x,1)   \ \ =  \ \
\mbox{\small
$\begin{pmatrix}
1 \\  s  \\ s^2 \\ x\\ xs
\end{pmatrix}^T
\begin{pmatrix}
1 & 0 & \alpha & 0 & \beta \\
 0 & 1-2  \alpha  & 0 & -\beta & \gamma \\
  \alpha  & 0 & 1 & -\gamma & 0 \\
 0 & -\beta & -\gamma & 1 & 0 \\
 \beta & \gamma & 0 & 0 & 1 \\
 \end{pmatrix}
 \begin{pmatrix}
1 \\  s  \\ s^2 \\ x\\ xs
\end{pmatrix}$.}
\]
There are a total of $16 = 2^{2g}$ points $(\alpha, \beta, \gamma)\in \C^3$ for which 
this matrix has rank three. Of these sixteen rank-three Gram matrices, only $4 = 2^g$ are real and each of these four are positive semidefinite.
For example, $(\alpha, \beta, \gamma) = (0,0,1)$ corresponds to the sum-of-squares representation
$f(s,1, x,1)= (1)^2 + (s+xs)^2 + (s^2 - x )^2$.
\end{Exm}

The above counts are for smooth surfaces of minimal degree. The
classification of varieties of minimal degree tells us that every
singular variety of minimal degree is a cone over a smooth one. We can
count the representations on singular varieties of minimal degree by
completing the square.
\begin{Lem}\label{lem:cone}
Let $X\subset \P^{n-1}$ be an $(m-1)$-dimensional variety of minimal degree, and let $Y\subset \P^{n}$ be a 
cone over $X$.  
The number of Gram matrices of rank $m+1$ of a generic positive quadratic form on $Y$ 
equals the number of Gram matrices of rank $m$ of a generic positive quadratic form on $X$. 
Moreover, this equality holds under restriction to real Gram matrices and to real positive semidefinite Gram matrices. 
\end{Lem}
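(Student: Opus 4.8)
The plan is to reduce the count on $Y$ to the count on $X$ by completing the square in the cone direction. First I would fix homogeneous coordinates $x_0,\ldots,x_n$ on $\P^n$ so that the vertex of the cone is the point $[0:\cdots:0:1]$ and $X$ is cut out inside the hyperplane $\V(x_n)$. With this choice the homogeneous ideal of $Y$ is the ideal $I_X$ of $X$, now regarded in $\R[x_0,\ldots,x_n]$, so it involves only $x_0,\ldots,x_{n-1}$ and $\R[Y]_1=\R[X]_1\oplus\R\cdot x_n$. Because $X$ has minimal degree, its ideal is generated by quadrics in $x_0,\ldots,x_{n-1}$, hence the degree-two piece $(I_Y)_2$ contains no monomial involving $x_n$. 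Consequently every symmetric $(n+1)\times(n+1)$ matrix that is a Gram matrix of $0$ on $Y$ has vanishing last row and column, which means that the coefficient of $x_n^2$ and of the products $x_ix_n$ are intrinsic to $f$: they are the same for all Gram matrices of $f$.

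Next I would write $f=a\,x_n^2+2\ell\,x_n+q$ with $a\in\R$, $\ell\in\R[X]_1$ and $q\in\R[X]_2$. Evaluating at the vertex gives the value $a$, so positivity of $f$ forces $a\geq 0$, and a generic positive $f$ has $a>0$. I would then apply the invertible linear substitution $y_n=x_n+\ell/a$, fixing $x_0,\ldots,x_{n-1}$, which brings $f$ into the form $f=a\,y_n^2+\tilde q$ with $\tilde q=q-\ell^2/a\in\R[X]_2$. Minimizing over the free cone coordinate $y_n$ shows that $f\geq 0$ on $Y(\R)$ if and only if $\tilde q\geq 0$ on $X(\R)$; moreover the assignment $f\mapsto(a,\ell,\tilde q)$ is a bijection, so as $f$ ranges over generic positive forms on $Y$ the form $\tilde q$ ranges over generic positive forms on $X$.

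In the new coordinates the block structure from the first paragraph forces every Gram matrix of $f$ to take the shape $\left(\begin{smallmatrix}\tilde A' & 0\\ 0 & a\end{smallmatrix}\right)$, where $\tilde A'$ is an arbitrary Gram matrix of $\tilde q$ on $X$. Since $a\neq 0$, this is a rank-preserving bijection with $\rank=\rank(\tilde A')+1$, identifying rank-$(m+1)$ Gram matrices of $f$ with rank-$m$ Gram matrices of $\tilde q$. Finally, the substitution itself acts on Gram matrices by the congruence $\tilde A\mapsto T^{T}\tilde A\,T$, which preserves matrix rank, reality, and, by Sylvester's law of inertia together with $a>0$, positive semidefiniteness. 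Composing the two bijections yields the asserted equality of counts for complex Gram matrices, real Gram matrices, and real positive semidefinite Gram matrices simultaneously.

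The step I expect to be the main obstacle is establishing that all Gram matrices of $f$ share the same last row and column, i.e.\ that completing the square annihilates the cross terms identically in $\R[Y]_2$ rather than merely for a chosen representative. This is precisely where the minimal-degree hypothesis on $X$ is used, via its quadric-generated ideal not involving $x_n$, and it is what guarantees that the rank drops by exactly one and that the inertia is tracked correctly under the reduction.
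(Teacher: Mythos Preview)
Your proof is correct and follows the same idea as the paper's: complete the square in the cone variable so that Gram matrices of $f$ on $Y$ correspond bijectively, via the Schur complement (equivalently, your coordinate change and congruence by $T$), to Gram matrices of $\tilde q=q-\ell^2/a$ on $X$, with rank, reality, and definiteness tracked and the rank dropping by exactly one. One minor point: the key fact that the last row and column of every Gram matrix of $f$ are determined by $f$ needs only that $(I_Y)_2=(I_X)_2\subset\R[x_0,\ldots,x_{n-1}]_2$, which holds for the cone over any nondegenerate $X$ and does not actually require the minimal-degree hypothesis you invoke there.
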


\begin{proof}
We can choose coordinates $[y:x_1 : \hdots: x_n]$ on $\P^n$ so that $X\subset \V(y)$ and 
$Y$ is the cone over $X$ from the point $ (1:0:\hdots:0)$. 
A generic positive quadratic form $f$ on $Y$ can be written $f = a y^2 + 2 b y + c$ where $a\in \R$, $b\in \R[X]_{1}$ and $c\in \R[X]_{2}$. 
By the genericity and positivity of $f$, we can take $a>0$.  
Then any Gram matrix $G$ of $f$ can be written as 
\[G\  = \ \begin{pmatrix}
a & B^T \\
B & C 
\end{pmatrix} 
\ =\
 \begin{pmatrix}
a & B^T \\
B & a^{-1}BB^T 
\end{pmatrix} 
+ 
  \begin{pmatrix}
0 & 0 \\
0 & C - a^{-1}BB^T 
\end{pmatrix} 
\]
where $B$ is the vector of coefficients of $b$ and $C$ is some Gram matrix of $c$. 
Then $G' = C - a^{-1}BB^T$ is a Gram matrix 
of $c - b^2/a \in \R[X]_{2}$. By the rank-additivity properties of Schur complements, the
rank of $G$ is $\rank(G') +1$. 
As $a$ and $B$ are fixed, the map $G\mapsto G'$ provides a bijection between 
rank-$r$ Gram matrices of $f$ and rank-$(r-1)$ Gram matrices of $c - b^2/a$
that preserves reality and positive semi-definiteness. 
Furthermore the genericity of $f\in \R[Y]_2$ ensures the genericity of $c-b^2/a \in \R[X]_2$. 
\end{proof}

\begin{Thm} \label{thm:allSurfaces}
Let $X \subset \P^n$ be a nondegenerate irreducible surface of minimal degree with dense real points. Then a generic quadratic form nonnegative on $X$ has exactly $2^{n-2}$ inequivalent representations as a sum of three squares.
\end{Thm}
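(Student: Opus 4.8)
The plan is to reduce to the cases in the Del Pezzo--Bertini classification of surfaces of minimal degree; see \cite{EisenbudHarris}. A nondegenerate irreducible surface $X\subset\P^n$ of minimal degree is either smooth --- a quadric in $\P^3$, the Veronese surface in $\P^5$, or a two-dimensional rational normal scroll --- or else singular, in which case it is a cone over a rational normal curve. In each case I would count positive semidefinite rank-three Gram matrices of a generic positive quadratic form, since by the discussion following Definition~\ref{def:Equiv} these correspond exactly to inequivalent representations as a sum of three real squares. Throughout I use that a generic nonnegative quadratic form on $X$ is in fact strictly positive, the positive forms being a dense open subset of the nonnegative cone, so that the genericity hypotheses of the results invoked below apply.

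For the smooth cases I would dispatch each by matching $2^{n-2}$ against an existing count. A smooth two-dimensional scroll is some $X_{P_{d,e}}$ with $d\geq e\geq 1$ sitting in $\P^{d+e+1}$, so that $n=d+e+1$ and $g=d+e-1=n-2$; Theorem~\ref{Thm:realsosrep} then gives exactly $2^g=2^{n-2}$ positive semidefinite rank-three Gram matrices. The smooth quadric in $\P^3$ is the Segre surface $X_{P_{1,1}}$, the special case $d=e=1$, $n=3$, giving $2^1=2^{n-2}$. For the Veronese surface, $n=5$ and the desired count $2^{n-2}=8$ is precisely the number of real sum-of-three-squares representations of a generic nonnegative ternary quartic established in \cite{PowersReznickScheidererSottileMR2103198}.

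It remains to treat a singular $X$, which is a cone over a rational normal curve $X'\subset\P^{n-1}$ of degree $n-1$. Here I would apply Lemma~\ref{lem:cone} with $m=2$: the number of positive semidefinite rank-three Gram matrices of a generic positive quadratic form on $X$ equals the number of positive semidefinite rank-two Gram matrices of a generic positive quadratic form on $X'$. Quadratic forms on $X'$ are binary forms of degree $2(n-1)$, and positive semidefinite rank-two Gram matrices are exactly their inequivalent representations as a sum of two squares. By the one-dimensional case \cite[Example 2.13]{ChoiLamReznickMR1327293}, a generic positive such form has $2^{\codim(X')}=2^{(n-1)-1}=2^{n-2}$ such representations, completing this case.

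The main point to verify is bookkeeping rather than a deep obstacle: the exponent $2^{n-2}$ must emerge uniformly from three structurally different computations --- the scroll count $2^{d+e-1}$, the ternary-quartic count $8$, and the curve codimension count $2^{(n-1)-1}$. The one genuinely substantive ingredient is that Lemma~\ref{lem:cone} preserves reality and positive semidefiniteness, so that the cone count reduces to the \emph{real} sum-of-two-squares count on the curve; this is exactly the content of the last sentence of that lemma, so no new work is needed. A secondary check, again guaranteed by Lemma~\ref{lem:cone}, is that the generic positivity of the form on the cone descends to a generic positive form on the curve.
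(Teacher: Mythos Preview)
Your proof is correct and follows essentially the same approach as the paper's: both case-split along the Del Pezzo--Bertini classification, invoke Theorem~\ref{Thm:realsosrep} for smooth scrolls, \cite{PowersReznickScheidererSottileMR2103198} for the Veronese surface, and Lemma~\ref{lem:cone} together with \cite[Example~2.13]{ChoiLamReznickMR1327293} for cones over rational normal curves. The only cosmetic difference is that you absorb the smooth quadric in $\P^3$ into the scroll case as $X_{P_{1,1}}$, whereas the paper lists it separately and simply asserts the count of two without further argument.
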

\begin{proof}
By the characterization of varieties of minimal degree, a surface of minimal degree is either a quadratic hypersurface in $\P^3$, 
the quadratic Veronese embedding of $\P^2$ in $\P^5$, a smooth rational normal scroll, or 
the cone over a rational normal curve. Positive quadratic forms on a quadratic hypersurface in $\P^3$ with dense real points have two inequivalent representations as sums of three squares.
Positive quadratic forms on the quadratic Veronese of $\P^2$ correspond to positive ternary quartics, which
have $8 =2^{5-2} $ inequivalent representations as a sum of three squares \cite{PowersReznickScheidererSottileMR2103198}. 
Theorem~\ref{Thm:realsosrep} addresses the case of quadratic forms on a smooth rational normal scroll. 

Finally, suppose $X\subset \P^n$ is the cone over the rational normal
curve $\nu_{d}(\P^1)$ with $d = n-1$.  By Lemma~\ref{lem:cone},
representations of a quadratic form on $X$ as a sum of three squares
correspond to representations of a quadratic form on $\nu_{d}(\P^1)$
as a sum of two squares.  As quadratic forms on $\nu_{d}(\P^1)$ are
bivariate forms of degree $2d$, we see from \cite[Example
2.13]{ChoiLamReznickMR1327293} that there are $2^{n-2}$ 
inequivalent such representations.
\end{proof}

\begin{Exm}\label{exm:repsforsurf} ($\dim(X)=2$, ${\rm codim}(X) = 3$). 
It is remarkable that the number of sums-of-squares representations is more regular over $\R$ than over $\C$. 
We illustrate this in the case when $X$ is a surface of minimal degree in $\P^5$. 
By the classification of varieties of minimal degree, an irreducible nondegenerate surface of minimal degree in $\P^5$ is projectively equivalent to 
one of four toric varieties: a cone over the quartic rational normal curve, the second Veronese of $\P^2$, or the rational normal 
scrolls $X_{P_{2,2}}$, $X_{P_{3,1}}$. The number of rank-three Gram matrices of each type are shown in Table~\ref{table:genus3}.

\begin{center}
\begin{table}[h]
\begin{center}
\begin{tabular}{c|c|c|c}
\ Surface \ \ 	&	\ \ real $\&$ psd \ \	& \ \	real \ \	&  \ \ complex \  \\
\hline
${\rm cone}(\nu_{4}(\P^1))$	&	8		&	11			& 	35	\\
$\nu_2(\P^2)$			&	8		&	15			& 	63	\\
$X_{P_{2,2}}$			&	8		&	16			& 	64	\\	
$X_{P_{3,1}}$	&			8		&	16			& \	64	\smallskip\\
\end{tabular}
\end{center}
\caption{The number of rank-three Gram matrices of a generic positive quadratic form on surfaces of minimal degree in $\P^5$. }\label{table:genus3}
\end{table}
\end{center}
The counts for $X_{P_{2,2}}$ and $X_{P_{3,1}}$ follow from Theorem~\ref{Thm:realsosrep}. 
Positive quadratic forms on $\nu_2(\P^2)$ correspond to ternary quartics, studied in \cite{PowersReznickScheidererSottileMR2103198}.
Finally, by Lemma~\ref{lem:cone}, rank-three Gram matrices of a generic quadratic form on the cone over $\nu_4(\P^1)$ correspond to rank-two Gram matrices of a generic quadratic form on $\nu_4(\P^1)$.
\end{Exm}


\begin{thebibliography}{10}

\bibitem{BSV}
G.~Blekherman, G.~G. Smith, and M.~Velasco.
\newblock Sums of squares and varieties of minimal degree.
\newblock {\em J. Amer. Math. Soc.}, 29(3):893--913, 2016.

\bibitem{ChoiLamReznickMR566480}
M.~D. Choi, T.~Y. Lam, and B.~Reznick.
\newblock Real zeros of positive semidefinite forms. {I}.
\newblock {\em Math. Z.}, 171(1):1--26, 1980.

\bibitem{ChoiLamReznickMR1327293}
M.~D. Choi, T.~Y. Lam, and B.~Reznick.
\newblock Sums of squares of real polynomials.
\newblock In {\em {$K$}-theory and algebraic geometry: connections with
  quadratic forms and division algebras ({S}anta {B}arbara, {CA}, 1992)},
  volume~58 of {\em Proc. Sympos. Pure Math.}, pages 103--126. Amer. Math.
  Soc., Providence, RI, 1995.

\bibitem{GramSpec}
L.~Chua, D.~Plaumann, R.~Sinn,  and C.~Vinzant.
\newblock Gram Spectrahedra. Preprint, arXiv:1608.00234, 2016.

\bibitem{CobleMR733252}
A.~B. Coble.
\newblock {\em Algebraic geometry and theta functions}, volume~10 of {\em
  American Mathematical Society Colloquium Publications}.
\newblock American Mathematical Society, Providence, R.I., 1982.
\newblock Reprint of the 1929 edition.

\bibitem{CoxLittleSchenckMR2810322}
D.~A. Cox, J.~B. Little, and H.~K. Schenck.
\newblock {\em Toric varieties}, volume 124 of {\em Graduate Studies in
  Mathematics}.
\newblock American Mathematical Society, Providence, RI, 2011.

\bibitem{DolMR2964027}
I.~V. Dolgachev.
\newblock {\em Classical algebraic geometry}.
\newblock Cambridge University Press, Cambridge, 2012.

\bibitem{EisenbudMR1322960}
D.~Eisenbud.
\newblock {\em Commutative algebra}, volume 150 of {\em Graduate Texts in
  Mathematics}.
\newblock Springer-Verlag, New York, 1995.

\bibitem{EisGotoMR741934}
D.~Eisenbud and S.~Goto.
\newblock Linear free resolutions and minimal multiplicity.
\newblock {\em J. Algebra}, 88(1):89--133, 1984.

\bibitem{EisenbudHarris}
D.~Eisenbud and J.~Harris.
\newblock On varieties of minimal degree (a centennial account).
\newblock In {\em Algebraic geometry: Bowdoin, 1985}, volume~46 of {\em Proc.
  Sympos. Pure Math.}, pages 3--13. Amer. Math. Soc., 1987.

\bibitem{Fulton}
W.~Fulton.
\newblock {\em Algebraic curves}.
\newblock Addison-Wesley Publishing Company, Redwood City, 1989.

\bibitem{HilbertMR1510517}
D.~Hilbert.
\newblock Ueber die {D}arstellung definiter {F}ormen als {S}umme von
  {F}ormenquadraten.
\newblock {\em Math. Ann.}, 32(3):342--350, 1888.

\bibitem{JakubovicMR0271129}
V.~A. Jakubovi{\v{c}}.
\newblock Factorization of symmetric matrix polynomials.
\newblock {\em Dokl. Akad. Nauk SSSR}, 194:532--535, 1970.

\bibitem{JouMR725671}
J.-P. Jouanolou.
\newblock {\em Th\'eor\`emes de {B}ertini et applications}, volume~42 of {\em
  Progress in Mathematics}.
\newblock Birkh\"auser Boston, Inc., Boston, MA, 1983.

\bibitem{Leep}
D.~B. Leep.
\newblock Sums of squares of polynomials and the invariant $g_n(r)$.
\newblock {\em {\rm Unpublished}}, 2006.

\bibitem{MilnorMR1487640}
J.~W. Milnor.
\newblock {\em Topology from the differentiable viewpoint}.
\newblock Princeton Landmarks in Mathematics. Princeton University Press,
  Princeton, NJ, 1997.
\newblock Based on notes by David W. Weaver, Revised reprint of the 1965
  original.

\bibitem{PfisterScheiderer}
A.~Pfister and C.~Scheiderer.
\newblock An elementary proof of {H}ilbert's theorem on ternary quartics.
\newblock {\em Journal of Algebra}, 371:1--25, 2012.

\bibitem{PSV}
D.~Plaumann, B.~Sturmfels, and C.~Vinzant.
\newblock Quartic curves and their bitangents.
\newblock {\em Journal of Symbolic Computation}, 48(6):712--733, 2011.

\bibitem{PowersReznickScheidererSottileMR2103198}
V.~Powers, B.~Reznick, C.~Scheiderer, and F.~Sottile.
\newblock A new approach to {H}ilbert's theorem on ternary quartics.
\newblock {\em C. R. Math. Acad. Sci. Paris}, 339(9):617--620, 2004.

\bibitem{Rajwade}
A.~R. Rajwade.
\newblock {\em Squares}, volume 171 of {\em London Math. Soc. Lect. Notes}.
\newblock Cambridge Univ. Press, Cambridge, 1993.

\bibitem{RosenblumRovnyakMR0273437}
M.~Rosenblum and J.~Rovnyak.
\newblock The factorization problem for nonnegative operator valued functions.
\newblock {\em Bull. Amer. Math. Soc.}, 77:287--318, 1971.

\bibitem{Rudin}
W.~Rudin.
\newblock Sums of squares of polynomials.
\newblock {\em Am. Math. Monthly}, 107:813Ð821, 2000.

\bibitem{ScheidererMR2580677}
C.~Scheiderer.
\newblock {H}ilbert's theorem on positive ternary quartics: a refined analysis.
\newblock {\em J. Algebraic Geom.}, 19(2):285--333, 2010.

\bibitem{SwanMR1803372}
R.~G. Swan.
\newblock Hilbert's theorem on positive ternary quartics.
\newblock In {\em Quadratic forms and their applications ({D}ublin, 1999)},
  volume 272 of {\em Contemp. Math.}, pages 287--292. Amer. Math. Soc.,
  Providence, RI, 2000.

\end{thebibliography}
\end{document}